\newtheorem{theorem}{Theorem}[section]
\newtheorem{lemma}[theorem]{Lemma}
\newtheorem{proposition}[theorem]{Proposition}
\newtheorem{corollary}[theorem]{Corollary}
\newtheorem{definition}[theorem]{Definition}
\theoremstyle{remark}
\newtheorem{remark}[theorem]{Remark}
\newtheorem{example}[theorem]{Example}
\newcommand{\n}[1]{\|#1 \|}
\newcommand{\z}{\bar z}
\newcommand{\bx}{\mathbf x}
\newcommand{\bw}{\mathbf w}
\newcommand{\by}{\mathbf y}
\newcommand{\bz}{\mathbf z}
\newcommand{\ba}{\mathbf a}
\newcommand{\bv}{\mathbf v}
\newcommand{\bA}{\mathbf A}
\newcommand{\bmu}{\boldsymbol \mu}
\newcommand{\R}{\mathbb R}
\newcommand{\lr}[1]{\langle #1\rangle}
\DeclareMathOperator{\prox}{prox}
\DeclareMathOperator{\gra}{gra}
\DeclareMathOperator{\zer}{zer}
\DeclareMathOperator{\Fix}{Fix}
\DeclareMathOperator{\vspan}{span}
\DeclareMathOperator{\rank}{rank}
\newcommand{\Hilbert}{\mathcal{H}}
\newcommand{\setto}{\rightrightarrows}
\newcommand{\wto}{\rightharpoonup}
\DeclareMathOperator{\Id}{Id}
\DeclareMathOperator*{\argmin}{arg\,min}
\newcommand{\integ}[2]{\llbracket{#1},{#2}\rrbracket}
\title{Resolvent Splitting for Sums of Monotone Operators\\ with Minimal Lifting}
\author{Yura Malitsky\thanks{Department of Mathematics,
                             Link\"oping University,
                             581~83 Linköping, Sweden.
                             Email:~\href{href:yurii.malitskyi@liu.se}
                                         {yurii.malitskyi@liu.se}}
        \and
        Matthew K. Tam\thanks{School of Mathematics \& Statistics,
                             The University of Melbourne,
                             Parkville VIC 3010, Australia.
                             Email:~\href{href:matthew.tam@unimelb.edu.au}
                                         {matthew.tam@unimelb.edu.au}}
}
\begin{document}
\maketitle

\begin{abstract}
In this work, we study fixed point algorithms for finding a zero in the sum of $n\geq 2$ maximally monotone operators by using their resolvents. More precisely, we consider the class of such algorithms where each resolvent is evaluated only once per iteration. For any algorithm from this class, we show that the underlying fixed point operator is necessarily defined on a $d$-fold Cartesian product space with $d\geq n-1$. Further, we show that this bound is unimprovable by providing a family of examples for which $d=n-1$ is attained. This family includes the Douglas--Rachford algorithm as the special case when $n=2$. Applications of the new family of algorithms in distributed decentralised optimisation and multi-block extensions of the alternation direction method of multipliers (ADMM) are discussed.
\end{abstract}
\paragraph{Keywords.} monotone operator $\cdot$ splitting algorithm $\cdot$ decentralised optimisation $\cdot$ ADMM
\paragraph{MSC2020.}
47H05 $\cdot$ 
65K10 $\cdot$ 
90C30         

\section{Introduction}
In this work, we study fixed point algorithms for finding a zero in the sum of finitely many maximally monotone operators defined on a real Hilbert space $\Hilbert$. That is, we consider problems of the form 
\begin{equation}\label{eq:mono inc intro}
 \text{find~}x\in\Hilbert\text{~such
   that~}0\in\sum_{i=1}^nA_i(x),
\end{equation}
where the set-valued operator $A_i\colon\Hilbert\setto\Hilbert$ is maximally
monotone for all $i\in\{1,\dots,n\}$. We focus on so-called \emph{backward schemes} for solving \eqref{eq:mono inc intro}. That is, we assume the operator $A_i$ is only available through its resolvent $J_{A_{i}}:=(\Id+A_{i})^{-1}$. Such methods are the backbone of modern optimisation~\cite{combettes2011pesquet}. Although resorting exclusively to the use of resolvents for \eqref{eq:mono inc intro} may seem restrictive, it is worth noting that algorithms have interpretations as the resolvents of appropriately choosen monotone operators~\cite{eckstein1992douglas,he2012convergence_pd}.

To be more precise, this work considers
the class of so-called \emph{frugal resolvent splittings} in the sense of
Ryu~\cite{ryu2020uniqueness} which form a special class of backward schemes. Roughly speaking, a fixed point algorithm for
solving \eqref{eq:mono inc intro} is a member of this class if it can be
described using only vector addition, scalar multiplication, and each of the
resolvents $J_{A_1},\dots,J_{A_n}$ once per iteration. The best known
examples of frugal resolvent splittings are the \emph{proximal point
  algorithm}~\cite{rockafellar1976monotone} for $n=1$ and the \emph{Douglas--Rachford algorithm}~\cite{lions1979splitting,eckstein1992douglas} for $n=2$.

The memory requirements of a fixed point algorithm can be measured using the
notion of \emph{lifting}~\cite{ryu2020uniqueness}. A fixed point
algorithm has \emph{$d$-fold lifting} if its underlying fixed point operator can
be defined on the $d$-fold Cartesian product
$\Hilbert^d:=\Hilbert\times\stackrel{(d)}{\dots}\times \Hilbert$. Since the
amount of memory needed to work with vectors in $\Hilbert^d$ is $d$-times larger than  vectors in $\Hilbert$, the quantity $d$ can be used to compare different algorithms. In practice, algorithms with less lifting (\emph{i.e.,}~smaller $d$ for given $n$) can be desirable as they allow for larger problems to be solved with less computational resources. The best algorithms according to this criteria are said to have \emph{minimal lifting}. That is, the smallest amount of lifting (for a given $n$) while still solving all feasible instance of \eqref{eq:mono inc intro}.

Motivated by this property, this work studies the relationship between the
number of monotone operators in~\eqref{eq:mono inc intro}, as specified by $n$,
and the minimal amount of lifting in frugal resolvent splitting for
solving~\eqref{eq:mono inc intro}, denoted by $d^*(n)$. The first few
values of $d^*(n)$ are known in the existing literature, as we now explain.
For $n=1$, the proximal point algorithm is a frugal resolvent splitting with
$1$-fold lifting (which is necessarily a minimal lifting), so $d^*(1)=1$.
For $n=2$, the Douglas--Rachford algorithm is a frugal resolvent
splitting with $1$-fold lifting, so $d^*(2)=1$. For $n=3$, Ryu
\cite{ryu2020uniqueness,aragon2020strengthened} devised a scheme with $2$-fold
lifting and established its minimality in \cite[Theorem~3]{ryu2020uniqueness}.
As a consequence, $d^*(3)=2$. For $n\geq 4$, the value of $d^*(n)$
has remained an
open problem. Indeed, Ryu's scheme for $n=3$ does not seem to generalise to the $n\geq 4$ setting (see Remark~\ref{r:extension to ryu}). Nevertheless, the pattern provided by the first few terms does suggest that $d^*(n)=n-1$ for $n\geq 2$.

In the first part of this work, we fully resolve the aforementioned open question by showing that it is indeed the case that $d^*(n)= n-1$ for $n\geq 2$. This is established in two steps: we first show that $d^*(n)\geq  n-1$ for $n\geq 2$ using techinques inspired by \cite{ryu2020uniqueness}, and we then show the bound cannot be improved by providing a new family of frugal resolvent splittings for \eqref{eq:mono inc intro} with $(n-1)$-fold lifting. We believe this family of algorithms, which does not rely on the usual product space reformulation (see Remark~\ref{ex:psf}), to be of interest in its own right. Indeed, in the second part of this work, we investigate implications and applications of the new family of algorithms for structured optimisation problems. More precisely, we use the family to devise two novel schemes: a method for distributed optimisation which is decentralised in the sense that it does not require a ``central coordinator'', and a multi-block extension of the \emph{alternating direction method of multipliers (ADMM).} Numerical examples are included to illustrate the methods, although this is not our main focus.

The remainder of this work is structured as follows. In Section~\ref{s:fpe}, we
recall the necessarily preliminaries on fixed point algorithms. In
Section~\ref{s:fpe lifting}, we establish the purported lower bound for $d^*(n)$ and, in Section~\ref{s:family}, we introduce the new family of frugal resolvent splittings and proof convergence. In Section~\ref{s:distributed}, we devise a scheme for distributed decentralised optimisation which uses resolvents and, in Section~\ref{s:admm}, we present our multi-block extension of ADMM.
Finally, Section~\ref{s:conclusion} concludes by outlining a number of directions and open questions for future research.

\section{Fixed Point Encodings}\label{s:fpe}
Throughout this work, $\Hilbert$ denotes a real Hilbert space with inner-product $\langle\cdot,\cdot\rangle$ and induced norm~$\|\cdot\|$. A set-valued operator $B\colon\Hilbert\setto\Hilbert$ is said to be \emph{monotone} if
 $$ \langle x-y,u-v\rangle\geq 0\qquad\forall (x,u),(y,v)\in\gra B:=\bigr\{(x,u):u\in B(x)\bigl\}. $$
A monotone operator is \emph{maximally monotone} if no proper extension is monotone. The \emph{resolvent} of an operator $B\colon\Hilbert\setto\Hilbert$ is the operator given by $J_B:=(I+B)^{-1}$. When $B$ is maximally monotone, its resolvent $J_B$ is single-valued with full domain, and firmly nonexpansive \cite{minty1962monotone}.

Let $\mathcal{A}_n$ denote the set of all $n$-tuples of maximally monotone operators on $\Hilbert$. In other words, $A=(A_1,\dots,A_n)\in\mathcal{A}_n$ if and only if $A_i\colon\Hilbert\setto\Hilbert$ is maximally monotone for all $i\in\{1,\dots,n\}$. Each $A\in\mathcal{A}_n$ induces an instance of the $n$-operator monotone inclusion problem given by
\begin{equation}\label{eq:mono inlc}
 \text{find~}x\in\Hilbert\text{~such that~}0\in\sum_{i=1}^nA_i(x).
\end{equation}
Note that the definition of $\mathcal{A}_n$ does not require the existence of a solution to \eqref{eq:mono inlc}. Further, due to commutativity of vector addition in $\Hilbert$, all cyclic permutations of an $n$-tuple $A\in\mathcal{A}_n$ induce the same instance of \eqref{eq:mono inlc}.

In the first part of this work, we study the structure of fixed point iterations for solving
\eqref{eq:mono inlc} whose update step
can be defined in terms of the resolvents of the monotone operators
$A_1,\dots,A_n$. Following Ryu~\cite{ryu2020uniqueness}, we formalise this
idea in terms of two operators both parameterised by $A\in\mathcal{A}_n$: a
\emph{fixed point operator}, denoted $T_A$, and a \emph{solution operator}, denoted $S_A$. The fixed point operator can be thought of as the basis for an iterative algorithm with the corresponding solution operator mapping its fixed points to solutions.

\begin{definition}[Fixed point encoding {\cite{ryu2020uniqueness}}]
A pair of operators $(T_A,S_A)$ is a \emph{fixed point encoding} for $\mathcal{A}_n$ if, for all $A\in\mathcal{A}_n$,
$$ \Fix T_A\neq\varnothing\iff\zer\left(\sum_{i=1}^nA_i\right)\neq\varnothing\text{~~and~~}
 \bz\in\Fix T_A  \implies  S_A(\bz)\in\zer\left(\sum_{i=1}^nA_i\right). $$
\end{definition}

\begin{example}\label{ex:lfpe}
The \emph{proximal point algorithm} is the fixed point encoding for $\mathcal{A}_1$ defined by $T_A=J_{A_1}$ and $S_A=\Id.$ The \emph{Douglas--Rachford algorithm} is the fixed point encoding for $\mathcal{A}_2$ defined by
\begin{equation}\label{eq:ex dr}
 T_A=\Id+J_{A_2}(2J_{A_1}-\Id)-J_{A_1}\text{~~and~~}S_A=J_{A_1}.
\end{equation}
Let $\bz=(z_1,z_2)\in\Hilbert^2$. Then \emph{Ryu's splitting algorithm} \cite{aragon2020strengthened,ryu2020uniqueness} is the fixed point encoding for $\mathcal{A}_3$ defined by
 $$ T_A(\bz) = \bz + \frac{1}{2}\begin{bmatrix}J_{A_3}\bigl(J_{A_1}(z_1)-z_1+J_{A_2}\bigl(J_{A_1}(z_1)+z_2\bigr)-z_2\bigr)-J_{A_1}(z_1)\\ J_{A_3}\bigl(J_{A_1}(z_1)-z_1+J_{A_2}\bigl(J_{A_1}(z_1)+z_2\bigr)-z_2\bigr)-J_{A_2}\bigl(J_{A_1}(z_1)+z_2\bigr)\end{bmatrix} $$
and $S_A(\bz)=J_{A_1}(z_1)$.
\end{example}

\begin{definition}[Resolvent splitting {\cite{ryu2020uniqueness}}]\label{def:resolvent splitting}
A fixed point encoding $(T_A,S_A)$ is a \emph{resolvent splitting} if, for all $A\in\mathcal{A}_n$, there is a finite procedure that evaluates $T_A$ and $S_A$ at a given point that uses only vector addition, scalar multiplication, and the resolvents of $A_1,\dots,A_n$.
\end{definition}
Definition~\ref{def:resolvent splitting} does not specify the number of times
per iteration that the resolvents of $A_1,\dots, A_n$ can be used. Thus,
without further restrictions, the computational cost per iteration of two
different resolvent splittings need not be the same. The following definition
provides a slight refinement as one way to address this.
\begin{definition}[Frugality {\cite{ryu2020uniqueness}}]\label{def:frugal}
A resolvent splitting $(T_A,S_A)$ is \emph{frugal} if, for all $A\in\mathcal{A}_n$, there is a finite procedure that evaluates $T_A$ and $S_A$ at a given point that uses only vector addition, scalar multiplication, and each of the resolvents of $A_1,\dots,A_n$ exactly once.
\end{definition}
It is worth emphasising that, given a point $z$, the procedure for evaluating
$T_A$ and $S_A$ described in Definition~\ref{def:frugal} must compute both
$T_A(z)$ and $S_A(z)$ using each of the resolvents precisely once. In other
words, the resolvent evaluations used to compute $S_A(z)$ must be the same as
those used to compute $T_A(z)$. As will be demonstrated in Section~\ref{s:fpe
  lifting}, every frugal resolvent splitting can be expressed and analysed in
terms of six coefficient matrices which fully define such a method.

\begin{example}
All fixed point encodings in Example~\ref{ex:lfpe} are frugal resolvent splittings.
For an example of a resolvent splitting for $A=(A_1)\in\mathcal{A}_1$ which is not frugal, consider $(T_A,S_A)$ given by
$$ T_A=\Id+J_{A_1}(2J_{A_1}-\Id)-J_{A_1}\text{~~and~~}S_A=J_{A_1}, $$
which coincides with the Douglas--Rachford algorithm applied to inclusion $0\in(A_1+A_1)(x)$. The resolvent splitting $(T_A,S_A)$ is not frugal in general because any procedure for computing $T_A$ requires the resolvent $J_{A_1}$ to be evaluated twice. 
\end{example}

\begin{definition}[Lifting {\cite{ryu2020uniqueness}}]
Let $d\in\mathbb{N}$. A fixed point encoding $(T_A,S_A)$ is a \emph{$d$-fold lifting} for $\mathcal{A}_n$ if $T_A\colon\Hilbert^d\to\Hilbert^d$ and $S_A\colon\Hilbert^d\to\Hilbert$.
\end{definition}
Using the well-known product space reformulation, it is straightforward to derive a frugal resolvent splitting for $\mathcal{A}_n$ with $n$-fold lifting. In what follows, denote the \emph{diagonal subspace} in $\Hilbert^n$ by
$$ \Delta_n := \bigl\{\bz=(z_1\dots,z_n)\in\Hilbert^n:z_1=\dots=z_n\bigr\}. $$
\begin{example}[Product space formulation]\label{ex:psf}
Let $A=(A_1,\dots,A_n)\in\mathcal{A}_n$. Then
 $$x\in\zer\left(\sum_{i=1}^nA_i\right) \iff \bx=(x,\dots,x)\in\zer\left(A+ N_{\Delta_n}\right), $$
where $N_{\Delta_n}$ denotes the (convex) normal cone to $\Delta_n$.
Consider the operators $T_A\colon\Hilbert^n\to\Hilbert^n$ and $S_A\colon\Hilbert^n\to\Hilbert$ given by
$$ T_A = \Id + J_A(2P_{\Delta_n}-\Id)-P_{\Delta_n}\text{~~and~~}S_A=P_1P_{\Delta_n}, $$
where $P_{\Delta_n}$ denotes the projection onto $\Delta_n$, $J_A=(J_{A_1},\dots,J_{A_n})$, and $P_1$ denotes the projection onto the first product coordinate of a vector in $\Hilbert^n$. Then $(T_A,S_A)$ is a frugal resolvent splitting of $\mathcal{A}_n$ with {$n$-fold} lifting. In fact, $T_A$ coincides with the Douglas--Rachford operator applied to the monotone operators $N_{\Delta_n}$ and $A$.
\end{example}

In light of Example~\ref{ex:psf}, it is natural to ask if there exists a frugal resolvent splitting for $\mathcal{A}_n$ with $d$-fold lifting for $d<n$. Indeed, this will be the main question we consider in the following section.

\section{Frugal Resolvent Splittings with Lifting}\label{s:fpe lifting}
Suppose $(T_A,S_A)$ is a frugal resolvent splitting for $\mathcal{A}_n$ with $d$-fold lifting where $d\leq n$. By definition, there exists a finite procedure for evaluating $T_A$ and $S_A$ using only vector addition, scalar multiplication and the resolvents of $A_1,\dots,A_n$ precisely once. The form of such a procedure may be completely described in terms of a number of matrices, as we now explain.

Consider evaluation of $T_A$ at an arbitrary point $\bz=(z_1,\dots,z_d)\in\Hilbert^d$. By frugality, each of the resolvents $J_{A_1},\dots,J_{A_n}$ is evaluated precisely once in the computation of $T_A(\bz)$. Thus there exist points $\bx=(x_1,\dots,x_n)\in\Hilbert^n$ and $\by=(y_1,\dots,y_n)\in\Hilbert^n$ used in the procedure such that
\begin{equation}\label{eq:can resolvent}
 \bx=J_{A}(\by) \iff 0 \in \bx-\by+A(\bx).
\end{equation}
The resolvents in \eqref{eq:can resolvent} are computed in some order in the
evaluation of $T_A(\bz)$. Without loss of generality, we assume they are
computed in the order $J_{A_1}(y_1),\dots,J_{A_n}(y_n)$. Since $T_A$ is a
resolvent splitting, $y_i$ is a linear combination of points already computed in
the process of evaluating $T_A(\bz)$. Thus, we must have
$ y_1\in\vspan\{z_1,\dots,z_d\}$ and
 $$ y_i\in\vspan\{z_1,\dots,z_d,x_1,\dots,x_{i-1},y_1,\dots,y_{i-1}\}=\vspan\{z_1,\dots,z_d,x_1,\dots,x_{i-1}\} \quad\forall i\geq 2, $$
with the coefficients in these linear combinations independent of $z$, $x$ and $y$. Thus, in a slight abuse of notation,\footnote{Strictly speaking, \eqref{eq:can y} should be written as $\by=(B\otimes\Id)\bz+(L\otimes\Id)\bx$ where $\otimes$ denotes the Kronecker product. In the special case when $\Hilbert=\mathbb{R}$, $B\otimes\Id=B$ and $L\otimes\Id=L$, and the two expressions coincide.} we may express this compactly as
\begin{equation}\label{eq:can y}
  \by=B\bz+L\bx.
\end{equation}
where $B\in\mathbb{R}^{n\times d}$ contains the coefficients of $z_1,\dots,z_n$ and $L\in\mathbb{R}^{n\times n}$ is a lower-triangular matrix with zeros on the diagonal which contains the coefficients of $x$. Since $T_A$ is a frugal resolvent splitting, we also have
 $$ T_A(\bz)\in\vspan\{z_1,\dots,z_d,x_1,\dots,x_n,y_1,\dots,y_n\}= \vspan\{z_1,\dots,z_d,x_1,\dots,x_n\}, $$
where, as before, the coefficients in the linear combinations independent of $\bz$ and $\bx$. Hence there exists $T_z\in\mathbb{R}^{d\times d}$ and $T_x\in\mathbb{R}^{d\times n}$ such that\footnote{We emphasise that the coefficient matrices $T_z$ and $T_x$ are constant and do not have any dependence on $\bz$ or $\bx$. The subscripts are merely labels to indicate which variables the coefficients belong to. An analogous remark applies to the coefficient matrices $S_z$ and $S_x$ in \eqref{eq:can SA}.}
\begin{equation}\label{eq:can TA}
  T_A(\bz) = T_z\bz+T_x\bx.
\end{equation}
Similarly, frugality also ensures
 $$ S_A(\bz)\in\vspan\{z_1,\dots,z_d,x_1,\dots,x_n,y_1,\dots,y_n\}= \vspan\{z_1,\dots,z_d,x_1,\dots,x_n\}, $$
hence there exists matrices $S_z\in\mathbb{R}^{1\times d}$ and $S_x\in\mathbb{R}^{1\times n}$ such that
\begin{equation}\label{eq:can SA}
 S_A(\bz) = S_z\bz+S_x\bx.
\end{equation}
Thus, altogether, any frugal resolvent splitting for $\mathcal{A}_n$ with $d$-fold lifting is completed described by \eqref{eq:can resolvent}, \eqref{eq:can y}, \eqref{eq:can TA} and \eqref{eq:can SA}, and the matrices $B,L,T_z,T_x,S_z,S_x$. With this in mind, the results in this section are formulated in terms of these matrices.

We shall require the following technical lemma.

\begin{lemma}\label{l:kerM}
Let $(T_A,S_A)$ be a frugal resolvent splitting for $\mathcal{A}_n$. Let $M$ denote the block matrix given by
$$ M := \begin{bmatrix}
 0       & \Id & -\Id & \Id \\
 B       &   L & -\Id &   0 \\
 T_z-\Id & T_x &    0 &   0 \\
\end{bmatrix}. $$
If $\bz\in\Fix T_A$, then there exists $\bv=\begin{bmatrix}\bz&\bx&\by&\ba\end{bmatrix}^\top\in\ker M$ with $\ba\in A(\bx)$. Conversely, if $\bv=\begin{bmatrix}\bz&\bx&\by&\ba\end{bmatrix}^\top\in\ker M$ and $\ba\in A(\bx)$, then $\bz\in \Fix T_A$, $\bx=J_A(\by)$ and $S_A(\bz)=S_z\bz+S_x\bx$.
\end{lemma}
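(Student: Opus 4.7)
The plan is to read off the three block rows of the equation $M\bv=0$ one at a time and match them directly against the canonical representation of a frugal resolvent splitting derived in \eqref{eq:can resolvent}, \eqref{eq:can y} and \eqref{eq:can TA}. Writing $\bv=[\bz,\bx,\by,\ba]^\top$, the first block row is $\bx-\by+\ba=0$, the second is $B\bz+L\bx-\by=0$, and the third is $(T_z-\Id)\bz+T_x\bx=0$. Thus the equation $M\bv=0$ packages together the intermediate-point formula $\by=B\bz+L\bx$ and the fixed-point identity $T_z\bz+T_x\bx=\bz$, while relating $\ba$ to $\bx$ and $\by$ through $\ba=\by-\bx$.

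For the forward implication, given $\bz\in\Fix T_A$ I would simply run the procedure defining $T_A(\bz)$: generate the auxiliary vectors $\bx,\by$ in order via $\by=B\bz+L\bx$ and $\bx=J_A(\by)$ (this is exactly how the canonical form was constructed, with $L$ lower-triangular so the generation is well posed), and then set $\ba:=\by-\bx$. The definition of the resolvent gives $\ba\in A(\bx)$; the second block row holds by the construction of $\by$; the third block row is the fixed-point assumption $\bz=T_A(\bz)$ combined with \eqref{eq:can TA}; and the first block row is just the definition of $\ba$.

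For the converse, assume $M\bv=0$ and $\ba\in A(\bx)$. The first block row yields $\by-\bx=\ba\in A(\bx)$, that is $\by\in(\Id+A)(\bx)$, so $\bx=J_A(\by)$. The second block row is $\by=B\bz+L\bx$. Feeding these back into \eqref{eq:can TA}, the third block row becomes $T_A(\bz)=T_z\bz+T_x\bx=\bz$, so $\bz\in\Fix T_A$. The last claim $S_A(\bz)=S_z\bz+S_x\bx$ is then immediate from \eqref{eq:can SA} applied to the same auxiliary data.

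The main obstacle is essentially nonexistent: the lemma is a dictionary between the procedural description of a frugal resolvent splitting and its compact matrix encoding, so the proof reduces to bookkeeping. The one point that requires care is to remember that the monotone-inclusion datum $\ba\in A(\bx)$ is a separate hypothesis, not encoded in $M$ itself; rows~1 and~2 of $M$ only become equivalent to the resolvent identity \eqref{eq:can resolvent} once this membership is combined with them.
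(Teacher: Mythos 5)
Your proposal is correct and follows essentially the same route as the paper's proof: reading off the three block rows of $M\bv=0$, constructing $\ba=\by-\bx\in A(\bx)$ from the resolvent identity in the forward direction, and reversing the dictionary in the converse. The only cosmetic difference is that the paper packages the forward direction as a single displayed set inclusion rather than verifying the rows one by one.
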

\begin{proof}
Suppose $\bz\in\Fix T_A$. Then there exists $\bx,\by\in\Hilbert^n$ defined by
\eqref{eq:can resolvent}--\eqref{eq:can TA} such that
$$\begin{bmatrix} \bx-\by+A(\bx) \\ B\bz+L\bx-\by \\ (T_z-\Id)\bz+T_x\bx \end{bmatrix} = M\begin{bmatrix}\bz\\ \bx\\ \by \\ A(\bx)\\ \end{bmatrix} \ni 0.$$
Consequently, there exists $\ba\in A(\bx)$ such that $\bv:=\begin{bmatrix}\bz& \bx& \by& \ba \end{bmatrix}^\top\in\ker M$.

Conversely, suppose $\bv=\begin{bmatrix}\bz&\bx&\by&\ba\end{bmatrix}^\top\in\ker M$ and $\ba\in A(\bx)$. The first row of $M$ gives $0=\bx-\by+\ba\in \bx-\by+A(\bx) \implies \bx=J_A(\by)$, the second row of $M$ gives $\by=B\bz+L\bx$, and the last row of $M$ together with \eqref{eq:can TA} gives $\bz=T_z\bz+T_x\bx=T_A(\bz)\implies \bz\in\Fix T_A$.  
Thus, as $\bz,\bx,\by$ satisfy \eqref{eq:can resolvent}--\eqref{eq:can TA}, \eqref{eq:can SA} implies $S_A(\bz)=S_z\bz+S_x\bx$.
\end{proof}

The following result shows that the solution mapping of a frugal resolvent splitting is necessarily of a specific form.
\begin{proposition}[Solution mappings]\label{prop:soln map}
Let $(T_A,S_A)$ be a frugal resolvent splitting for $\mathcal{A}_n$. Then, for all $\bar{\bz}\in\Fix T_A$ and $\bar{\bx}=J_A(\bar{\by})$, we have
$$S_A(\bar{\bz})=\frac{1}{n}\sum_{i=1}^n\bar{y}_i=\bar{x}_1=\dots=\bar{x}_d. $$
\end{proposition}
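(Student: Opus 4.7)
The plan is to exploit the fixed-point encoding axiom by probing with two carefully chosen replacements for $A$ that share the same local data $(\bar{x}_i, \bar{a}_i)$ at the fixed point. Starting from $\bar{\bz} \in \Fix T_A$, Lemma~\ref{l:kerM} furnishes $\bar{\bv} = [\bar{\bz},\bar{\bx},\bar{\by},\bar{\ba}]^\top \in \ker M$ with $\bar{\ba} \in A(\bar{\bx})$, so that $\bar{a}_i = \bar{y}_i - \bar{x}_i$ and $S_A(\bar{\bz}) = S_z\bar{\bz} + S_x\bar{\bx}$. Crucially, the kernel condition depends on $A$ only through the inclusions $\bar{a}_i \in A_i(\bar{x}_i)$. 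Hence, whenever $\tilde{A} \in \mathcal{A}_n$ satisfies $\bar{a}_i \in \tilde{A}_i(\bar{x}_i)$ for each $i$, the converse direction of Lemma~\ref{l:kerM} yields $\bar{\bz} \in \Fix T_{\tilde{A}}$ and $S_{\tilde{A}}(\bar{\bz}) = S_z\bar{\bz} + S_x\bar{\bx} = S_A(\bar{\bz})$, and the encoding axiom forces this common value into $\zer\bigl(\sum_i \tilde{A}_i\bigr)$. This gives a perturbation principle: by choosing $\tilde{A}$ with a computable zero set we can pin down $S_A(\bar{\bz})$.

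For the first probe (consensus), I would set $\tilde{A}^{(1)}_i := \bar{a}_i + N_{\{\bar{x}_i\}}$, which is maximally monotone with domain $\{\bar{x}_i\}$, and satisfies $\bar{a}_i \in \tilde{A}^{(1)}_i(\bar{x}_i)$ since $N_{\{\bar{x}_i\}}(\bar{x}_i) = \Hilbert$. The perturbation principle gives $\bar{\bz} \in \Fix T_{\tilde{A}^{(1)}}$ and hence $\zer\bigl(\sum_i \tilde{A}^{(1)}_i\bigr) \neq \varnothing$. But $\dom\bigl(\sum_i \tilde{A}^{(1)}_i\bigr) = \bigcap_i \{\bar{x}_i\}$, which is non-empty only when $\bar{x}_1 = \cdots = \bar{x}_n$. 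Writing $\bar{x}$ for this common value, one computes $\zer\bigl(\sum_i \tilde{A}^{(1)}_i\bigr) = \{\bar{x}\}$, yielding $S_A(\bar{\bz}) = \bar{x}$.

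For the second probe (averaging), I would take $\tilde{A}^{(2)}_i(x) := x + \bar{a}_i - \bar{x}$, an affine maximally monotone operator with $\bar{a}_i \in \tilde{A}^{(2)}_i(\bar{x})$. A direct calculation yields $\zer\bigl(\sum_i \tilde{A}^{(2)}_i\bigr) = \bigl\{\bar{x} - \tfrac{1}{n}\sum_i \bar{a}_i\bigr\}$, into which the perturbation principle forces $S_A(\bar{\bz})$. Combining with $S_A(\bar{\bz}) = \bar{x}$ gives $\sum_i \bar{a}_i = 0$. Since $\bar{y}_i = \bar{x}_i + \bar{a}_i = \bar{x} + \bar{a}_i$, averaging produces $\tfrac{1}{n}\sum_i \bar{y}_i = \bar{x}$, which equals both $S_A(\bar{\bz})$ and the common value of $\bar{x}_1,\dots,\bar{x}_n$.

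The main obstacle is conceptual rather than computational: recognising that the fixed-point encoding axiom, which a priori constrains $S_A$ only at fixed points of $T_A$ for one given $A$, can be leveraged for \emph{any} maximally monotone perturbation $\tilde{A}$ sharing the graph elements $\{(\bar{x}_i, \bar{a}_i)\}$. Once this principle is in hand, selecting two probes (one supported on singletons, one affine) exposes the consensus of the $\bar{x}_i$ and the vanishing of $\sum_i \bar{a}_i$, which together deliver all three equalities from an inspection of $\zer\bigl(\sum_i \tilde{A}_i\bigr)$.
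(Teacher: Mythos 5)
Your proof is correct, and it rests on exactly the same structural insight as the paper's: because Lemma~\ref{l:kerM} depends on the operator tuple $A$ only through the graph elements $\{(\bar{x}_i,\bar{a}_i)\}$, one may probe the fixed-point encoding with any $\tilde{A}\in\mathcal{A}_n$ sharing those graph elements, and the encoding axiom then pins $S_A(\bar{\bz})$ inside $\zer(\sum_i\tilde{A}_i)$. Where you differ from the paper is in the choice of probes. The paper uses $n+1$ single-valued affine tuples: the constant operator $A^{(0)}=\bar{\ba}$ (to obtain $\sum_i\bar{a}_i=0$) and, for each $j$, a tuple $A^{(j)}$ that is constant except for an identity perturbation in the $j$-th slot (to obtain $x^*=\bar{x}_j$). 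You instead use two probes: first the domain-restricted normal-cone tuple $\tilde{A}^{(1)}_i=\bar{a}_i+N_{\{\bar{x}_i\}}$, whose sum has domain $\bigcap_i\{\bar{x}_i\}$, so non-emptiness of the zero set forces consensus $\bar{x}_1=\dots=\bar{x}_n$ and simultaneously pins $S_A(\bar{\bz})$ to that common value; second, a single affine probe $\tilde{A}^{(2)}_i(x)=x+\bar{a}_i-\bar{x}$ whose explicit zero gives $\sum_i\bar{a}_i=0$, and hence the averaging identity. Your version is more economical (two probes instead of $n+1$) and the normal-cone probe exposes the consensus constraint as a domain obstruction rather than through $n$ separate linear equations; the paper's version has the minor advantage of staying entirely within single-valued affine operators, making the zero-set computations completely mechanical. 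Both arguments are valid and essentially interchangeable.
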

\begin{proof}
Let $A\in\mathcal{A}_n$, let $\bar{\bz}\in\Fix T_A$ and set $x^*=S_A(\bar{\bz})$. By Lemma~\ref{l:kerM}, there exists a vector $\bv:=\begin{bmatrix}\bar{\bz}&\bar{\bx}&\bar{\by}&\bar{\ba}\end{bmatrix}^\top\in\ker M$ with $\bar{\ba}\in A(\bar{\bx})$ and $x^*=S_A(\bar{\bz})=S_z\bar{\bz}+S_x\bar{\bx}$.
Next, consider the $n$-tuples of operators
$A^{(0)},A^{(1)},\dots, A^{(n)}\in\mathcal{A}_n$ given by
 $$ A^{(0)}(\bx) = \bar{\ba}\text{~~and~~}A^{(j)}(\bx) = \bar{\ba} + \begin{bmatrix}0\\ \vdots \\ x_j-\bar{x}_j \\ \vdots \\ 0 \\ \end{bmatrix}\quad\forall j=1,\dots,n.$$
Since $\bv\in\ker M$ and $\bar{\ba}=A^{(j)}(\bar{\bx})$,
Lemma~\ref{l:kerM} implies $\bar{\bz}\in\Fix T_{A^{(j)}}$ and
$\bar{\bx}=J_{A^{(j)}}(\bar{\by})$ and
$S_{A^{(j)}}(\bar{\bz})=S_z\bar{\bz}+S_x\bar{\bx}=x^*$.
Consequently, we have $0 = \sum_{i=1}^nA^{(0)}_i(x^*) = \sum_{i=1}^n\bar{a}_i$ and
$$ 0 = \sum_{i=1}^nA^{(j)}_i(x^*) = \sum_{i=1}^n\bar{a}_i + x^*-\bar{x}_j =x^*-\bar{x}_j\quad\forall j=1,\dots,n, $$
from which it follows that $x^*=\bar{x}_1=\dots=\bar{x}_n$. Finally, since $\bar{\bx}=J_{A^{(0)}}(\bar{\by})$, we have $\bar{\by}-\bar{\bx}=A^{(0)}(\bar{\bx})=\bar{\ba}$ and hence
$ \sum_{i=1}^n\bar{y}_i-nx^* = \sum_{i=1}^n\bar{a}_i=0$. The proof is now complete.
\end{proof}

Thanks to Example~\ref{ex:psf}, we know that there exists a frugal resolvent splitting for $\mathcal{A}_n$ with $d$-fold lifting with $d=n$. The following propositions show that there are no frugal resolvent splittings with lifting when $d\leq n-2$, and thereby extends \cite[Theorem~3]{ryu2020uniqueness} beyond the setting with $n=3$.
\begin{theorem}[Minimal lifting]\label{th:lifting}
Let $(T_A,S_A)$ be a frugal resolvent splitting for $\mathcal{A}_n$ with $d$-fold lifting. If $n\geq 2$, then $d\geq n-1$.
\end{theorem}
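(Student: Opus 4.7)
The plan is to argue by contradiction: assuming $d \leq n-2$, I would derive a lower bound $\rank(B) \geq n-1$ on the coefficient matrix $B \in \R^{n\times d}$ from Section~\ref{s:fpe lifting}, which is absurd. The lower bound will come from instantiating the linear identities $\bar{\by} = B\bar{\bz} + L\bar{\bx}$ and $\bar{\bx} - \bar{\by} + \bar{\ba} = 0$ at a carefully chosen $n$-parameter family of maximally monotone operators and then applying Proposition~\ref{prop:soln map}. It is enough to work in $\Hilbert = \R$, since the coefficient matrices of a frugal resolvent splitting are scalar and independent of the ambient Hilbert space.

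For the test family I would take the affine shifts $A_i^{(r)}(x) := x - r_i$ with parameter $r = (r_1,\dots,r_n) \in \R^n$. Each such $A_i^{(r)}$ is maximally monotone (continuous affine with slope~$1$) and the sum has the unique zero $\bar r := \tfrac{1}{n}\sum_i r_i$, so the fixed point encoding property yields $\Fix T_A \neq \varnothing$. I would pick $\bar{\bz} \in \Fix T_A$ and use Lemma~\ref{l:kerM} to obtain corresponding $\bar{\bx},\bar{\by},\bar{\ba}$ with $\bar a_i = \bar x_i - r_i$. Proposition~\ref{prop:soln map} combined with $S_A(\bar{\bz}) \in \zer(\sum_i A_i^{(r)}) = \{\bar r\}$ then pins down $\bar x_1 = \dots = \bar x_n = \bar r$, and consequently $\bar y_i = \bar x_i + \bar a_i = 2\bar r - r_i$.

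Substituting these values into $\bar{\by} = B\bar{\bz} + L\bar{\bx}$ gives $B\bar{\bz} = \bar r\,(2I - L)\mathbf{1} - r$, where $\mathbf{1} \in \R^n$ is the all-ones vector. Writing $v := (2I - L)\mathbf{1}$ and $\phi(r) := r - \tfrac{1}{n}(\mathbf{1}^\top r)\,v$, this is equivalent to $\phi(r) \in \range(B)$ for every $r \in \R^n$. The matrix $\phi = I - \tfrac{1}{n}\,v\mathbf{1}^\top$ is a rank-one perturbation of the identity; a direct computation of its kernel shows that this kernel is either $\{0\}$ (when $v=0$ or $\mathbf{1}^\top v \neq n$) or one-dimensional, namely $\mathrm{span}\{v\}$ (when $v\neq 0$ and $\mathbf{1}^\top v = n$). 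In every case $\rank(\phi) \geq n-1$, and hence $d \geq \rank(B) \geq \rank(\phi) \geq n-1$.

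The one genuinely creative step is the choice of the test family. Constant operators $A_i(x)=c_i$ only yield the single linear constraint $(I-L)\mathbf{1} \in \range(B)$, while scaling operators $\alpha_i \Id$ collapse the solution to $0$ and give no dependence on the parameter. The affine shifts $x - r_i$ are the simplest family that simultaneously (i)~has a unique, explicit zero for every parameter value, so that Proposition~\ref{prop:soln map} determines $\bar{\bx}$ completely, and (ii)~carries an $n$-dimensional parameter probing $\range(B)$, whose image under $\phi$ has codimension at most one. After this choice is made, the remaining argument is a short linear-algebraic computation with a rank-one perturbation of the identity, and no further technical obstacle is expected.
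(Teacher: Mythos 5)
Your argument is correct, but it reaches the bound by a genuinely different route than the paper. The paper also assumes $d\leq n-2$, but its mechanism is in a sense dual to yours: it applies rank--nullity to $T_x\in\R^{d\times n}$ to get $\dim\ker T_x\geq n-d\geq 2$, picks $\bar{\bx}\notin\Delta_n$ with $T_x\bar{\bx}=T_x\bx$, and then uses the \emph{constant} operators $\bar{A}\equiv\bar{\ba}:=B\bz+L\bar{\bx}-\bar{\bx}$ to manufacture, via Lemma~\ref{l:kerM}, a fixed point of $T_{\bar{A}}$ whose associated $\bar{\bx}$ is not diagonal --- contradicting Proposition~\ref{prop:soln map}. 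You instead probe with the $n$-parameter affine family $A_i^{(r)}(x)=x-r_i$, for which the zero of the sum is the unique point $\bar r=\tfrac1n\mathbf{1}^\top r$; Proposition~\ref{prop:soln map} together with the encoding property then pins down $\bar{\bx}=\bar r\mathbf{1}$ and $\bar{\by}=2\bar r\mathbf{1}-r$ completely, and the identity $\bar{\by}=B\bar{\bz}+L\bar{\bx}$ becomes the range condition $\range\bigl(I-\tfrac1n v\mathbf{1}^\top\bigr)\subseteq\range(B)$ with $v=(2I-L)\mathbf{1}$, whence $d\geq\rank B\geq n-1$ (your kernel computation showing $\dim\ker\bigl(I-\tfrac1n v\mathbf{1}^\top\bigr)\leq 1$ in all cases is correct, and the contradiction framing is not even needed --- the bound comes out directly). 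The two proofs thus exploit complementary rows of the matrix $M$: the paper's uses the row $(T_z-\Id)\bz+T_x\bx=0$ and only ever needs constant operators, while yours never touches $T_x$ and instead extracts a rank bound on $B$ by varying the operator family. Both rest on the same two pillars, Lemma~\ref{l:kerM} and Proposition~\ref{prop:soln map}, and are of comparable length; yours has the mild advantage of producing a quantitative statement about the coefficient matrix $B$ itself, and your restriction to a one-dimensional subspace of $\Hilbert$ is legitimate since the coefficient matrices act coordinatewise.
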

\begin{proof}
Suppose, by way of a contradiction, that $(T_A,S_A)$ is a frugal resolvent
splitting for $\mathcal{A}_n$ with $d$-fold lifting such that $d\leq n-2$. Let
$A\in\mathcal{A}_n$ with $\zer(\sum_{i=1}^nA_i)\neq\varnothing$ and
$\bz\in\Fix T_A$. By Lemma~\ref{l:kerM}, there exists
$\bv=\begin{bmatrix}\bz&\bx&\by&\ba\end{bmatrix}^\top\in\ker M$ with
$\ba\in A(\bx)$. In particular, the last row of $M$ implies that
$ 0 = (T_z-\Id)\bz+T_x\bx$.
Since $T_x\in\mathbb{R}^{d\times n}$ and $d\leq n-2$, the rank-nullity theorem implies
$\dim\ker T_x = n - \rank T_x \geq n - d \geq 2.$
Thus, since $\dim\Delta_n=1$, there exists $\bar{\bx}\not\in\Delta_n$ such that $T_x\bx=T_x\bar{\bx}$.

Set $\bar{\bz}:=\bz$, $\bar{\by}:=B\bar{\bz}+L\bar{\bx}$, $\bar{\ba}:=\bar{\by}-\bar{\bx}$ and consider $\bar{A}\in\mathcal{A}_n$ given by $\bar{A}(\bx)=\bar{\ba}$. Then $\bar{\bv}=\begin{bmatrix}\bar{\bz}&\bar{\bx}&\bar{\by}&\bar{\ba}\end{bmatrix}^\top\in\ker M$ with $\bar{\ba}\in\bar{A}(\bar{\bx})$. By the Lemma~\ref{l:kerM}, $\bar{\bz}\in\Fix T_{\bar{A}}$ and $\bar{\bx}=J_{\bar{A}}(\bar{\by})$. On the other hand, Proposition~\ref{prop:soln map} implies $\bar{\bx}\in\Delta_n$. Thus a contradiction is obtained and so completes the proof.
\end{proof}

Theorem~\ref{th:lifting} provides a lower bound on the value of $d$ for frugal resolvent splittings with $d$-folding lifting, but says nothing about whether this bound is tight. In the following section, we address this question by showing that it is indeed unimprovable for $\mathcal{A}_n$.

\begin{remark}
The definition of a resolvent splitting (Definition~\ref{def:resolvent splitting}) only allows the resolvents $J_{A_1},\dots,J_{A_n}$ to be used in evaluation of $T_A$ and $S_A$.
Since the resolvent $J_{\omega_iA_i}$ for parameter $\omega_i>0$ is typically no more difficult to compute than the resolvent $J_{A_i}$, a natural generalisation of Definition~\ref{def:resolvent splitting} is to allow the resolvents $J_{\omega_1A_1},\dots,J_{\omega_nA_n}$ for parameters $\omega_1,\dots,\omega_n>0$, not necessarily all equal. After the publication of this work, this setting has been considered in \cite{aragon2022primal} under the name ``paramterised resolvent splittings''. In the same work, it was shown that the conclusion of Theorem~\ref{th:lifting} also holds for parameterised resolvents splitting \cite[Theorem~2.10]{aragon2022primal}. Examples of frugal parameterised resolvents splitting which attain this bound can be found in \cite{condat2021proximal,campoy2022product}.
\end{remark}

\section{A Family of Resolvent Splitting}\label{s:family}
In this section, we provide  an example of a frugal resolvent splitting for $\mathcal{A}_n$ with $(n-1)$-fold lifting, assuming $n\geq 2$. Due to Theorem~\ref{th:lifting} this is unimprovable for this problem class. In what follows, the set of integers between $k,l\in\mathbb{N}$ is denoted
 $$\integ{k}{l} := \begin{cases}
                     \{k,k+1,\dots,l\} & \text{if~}k\leq l, \\
                     \varnothing & \text{otherwise.}
                   \end{cases}$$
Let $\gamma\in(0,1)$ and $A=(A_1,\dots,A_n)\in\mathcal{A}_n$. Consider the operator $T_A\colon\Hilbert^{n-1}\to\Hilbert^{n-1}$ given by
\begin{equation}\label{eq:T_A}
T_A(\bz)
:=  \bz +
\gamma\begin{pmatrix}
x_2-x_1 \\
x_3-x_2 \\
\vdots \\
x_{n}-x_{n-1} \\
\end{pmatrix}
\end{equation}
where $\bx=(x_1,\dots,x_n)\in\Hilbert^n$ depends on $\bz=(z_1,\dots,z_{n-1})\in\Hilbert^{n-1}$ and is given by
\begin{equation}\label{eq:xi} \begin{cases}
    x_1=J_{A_1}(z_1),  \\
    x_i=J_{A_i}(z_i+x_{i-1}-z_{i-1}) \quad \forall i\in\integ{2}{n-1}, \\
    x_n=J_{A_n}(x_1+x_{n-1}-z_{n-1}). 
   \end{cases}
\end{equation}

\begin{remark}\label{r:special cases}
For $n=2$, the operator $T_A$ described by \eqref{eq:T_A}--\eqref{eq:xi} is an under-relaxation of the standard Douglas--Rachford algorithm. Indeed, in this case, $T_A\colon\Hilbert\to\Hilbert$ can be expressed as
\begin{equation}\label{eq:dr gamma}
 T_A(z) = z + \gamma(x_2-x_1)= z+\gamma\left(J_{A_2}(2J_{A_1}(z)-z) - J_{A_1}(z)\right) = \left(1-\frac{\gamma}{2}\right)\Id + \frac{\gamma}{2}R_{A_2}R_{A_1},
\end{equation}
where $R_{A_i}=2J_{A_i}-\Id$ denotes the \emph{reflected resolvent} of the
monotone operator $A_i$. The (unrelaxed) Douglas--Rachford algorithm in
\eqref{eq:ex dr} corresponds to the limiting case with $\gamma=1$. For $n=3$,
the operator $T_A$ is different than the one described by Ryu in
\cite[Theorem~4]{ryu2020uniqueness} (see also \cite[Theorem
8]{aragon2020strengthened}), which in the notation used in this section, corresponds to
\begin{equation}\label{eq:ryu our notation}
\bz \mapsto  \bz +
\gamma\begin{pmatrix}
x_3-x_1 \\
x_3-x_2 \\
\end{pmatrix}\text{~~where~~}\left\{\begin{aligned}
                              x_1 &= J_{A_1}(z_1) \\
                              x_2 &= J_{A_2}(z_2+x_1) \\
                              x_3 &= J_{A_3}(x_1-z_1+x_2-z_2).
                             \end{aligned}\right.
\end{equation}
As we will discuss in Remark~\ref{r:extension to ryu}, it is currently not obvious how to extend this method to the setting where $n\geq 4$.
\end{remark}

Let $\Omega$ denote the set of all points $(\bar{\bz},\bar{x})\in\Hilbert^{n-1}\times\Hilbert
$ where $\bar{\bz}=(\z_1,\dots,\z_{n-1})$ satisfying
\begin{equation*}
\begin{cases}
\bar{x}=J_{A_{1}}(\bar{z}_1), \\
\bar{x}=J_{A_i}(\bar{z}_i-\bar{z}_{i-1}+\bar{x})\quad  i\in\integ{2}{n-1} ,\\
\bar{x} =J_{A_n}(2\bar{x}-z_{n-1}).
\end{cases}
\end{equation*}

\begin{lemma}\label{l:fixed points}
Let $n\geq 2$, $A=(A_1,\dots,A_n)\in\mathcal{A}_n$ and $\gamma>0$. The following assertions hold.
\begin{enumerate}[(a)]
\item\label{l:fixed points a} If $\bar{\bz}\in\Fix T_A$, then there exists $\bar{x}\in\Hilbert$ such that $(\bar{\bz},\bar{x})\in\Omega$.
\item\label{l:fixed points b} If $\bar{x}\in \zer\left(\sum_{i=1}^nA_i\right)$, then there exists $\bar{\bz}\in\Hilbert$ such that $(\bar{\bz},\bar{x})\in\Omega$.
\item\label{l:fixed points c} If $(\bar{\bz},\bar{x})\in\Omega$, then $\bar{\bz}\in\Fix T_A$ and $\bar{x}\in \zer\left(\sum_{i=1}^nA_i\right)$.
\end{enumerate}
Consequently,
$$ \Fix T_A\neq\varnothing \iff \Omega\neq\varnothing \iff \zer\left(\sum_{i=1}^nA_i\right)\neq\varnothing.$$
\end{lemma}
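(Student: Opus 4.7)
The plan is to prove the three parts of the lemma in the order (a), (c), (b), since (c) leverages the equivalence uncovered in (a), and (b) is then a straightforward construction from a given zero. Throughout, the governing observation is that the recursion \eqref{eq:xi} becomes a fixed point recursion only when all the intermediate resolvent outputs $x_1,\dots,x_n$ collapse to a single common value $\bar{x}$; this explains the structure of $\Omega$.

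For part \eqref{l:fixed points a}, I would begin with $\bar{\bz}\in\Fix T_A$. Reading off \eqref{eq:T_A}, the fixed point equation gives $x_{i+1}-x_i=0$ for $i\in\integ{1}{n-1}$, so there exists a common value $\bar{x}\in\Hilbert$ with $x_1=\dots=x_n=\bar{x}$, where $x_i$ is computed from $\bar{\bz}$ via \eqref{eq:xi}. Substituting $x_{i-1}=\bar{x}$ and $x_1=\bar{x}$ into \eqref{eq:xi} produces exactly the defining equations of $\Omega$, so $(\bar{\bz},\bar{x})\in\Omega$.

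For part \eqref{l:fixed points c}, given $(\bar{\bz},\bar{x})\in\Omega$, I would run a short induction to show that the $x_i$ produced by \eqref{eq:xi} from the input $\bar{\bz}$ are each equal to $\bar{x}$: the base case is $x_1=J_{A_1}(\bar{z}_1)=\bar{x}$, and the inductive step uses the defining equation of $\Omega$ for indices $2,\dots,n-1$ (and then a direct check for index $n$ using $x_1=x_{n-1}=\bar{x}$). This immediately yields $T_A(\bar{\bz})=\bar{\bz}$. To show $\bar{x}\in\zer(\sum_iA_i)$, I would rewrite each resolvent identity in $\Omega$ using $u=J_B(v)\iff v-u\in B(u)$, getting $\bar{z}_1-\bar{x}\in A_1(\bar{x})$, $\bar{z}_i-\bar{z}_{i-1}\in A_i(\bar{x})$ for $i\in\integ{2}{n-1}$, and $\bar{x}-\bar{z}_{n-1}\in A_n(\bar{x})$. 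Summing these $n$ inclusions telescopes to $0\in\sum_{i=1}^nA_i(\bar{x})$.

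For part \eqref{l:fixed points b}, given $\bar{x}\in\zer(\sum_iA_i)$, I would choose any $\bar{a}_i\in A_i(\bar{x})$ with $\sum_{i=1}^n\bar{a}_i=0$ and define $\bar{z}_i:=\bar{x}+\sum_{j=1}^i\bar{a}_j$ for $i\in\integ{1}{n-1}$. A direct verification shows that these partial sums make each of the inclusions $\bar{z}_1-\bar{x}\in A_1(\bar{x})$, $\bar{z}_i-\bar{z}_{i-1}\in A_i(\bar{x})$, and $\bar{x}-\bar{z}_{n-1}\in A_n(\bar{x})$ hold; the last one uses precisely the constraint $\sum\bar{a}_i=0$. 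Reading these inclusions back as resolvent identities puts $(\bar{\bz},\bar{x})\in\Omega$. The final biconditional chain is then immediate: \eqref{l:fixed points a} and \eqref{l:fixed points c} give $\Fix T_A\neq\varnothing\iff\Omega\neq\varnothing$, and \eqref{l:fixed points b} and \eqref{l:fixed points c} give $\Omega\neq\varnothing\iff\zer(\sum_iA_i)\neq\varnothing$. None of the three parts appears to involve any genuine difficulty; the main care is bookkeeping the telescoping sum in (c) and the consistency of the partial-sum construction in (b), both of which are algebraic.
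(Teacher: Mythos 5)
Your proposal is correct and follows essentially the same route as the paper: (a) by collapsing the fixed point equation to $x_1=\dots=x_n=J_{A_1}(\bar{z}_1)$, (b) by the same partial-sum construction $\bar{z}_i=\bar{x}+\sum_{j=1}^{i}v_j$, and (c) by rewriting the resolvent identities as inclusions and telescoping. Your explicit induction in (c) merely spells out what the paper leaves as an immediate observation.
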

\begin{proof}
\eqref{l:fixed points a}:~Let $\bar{\bz}\in\Fix T_A$ and set $\bar{x}:=J_{A_1}(\bar{z}_1)$. Since $T_A(\bar{\bz})=\bar{\bz}$, \eqref{eq:T_A} implies $(\bar{\bz},\bar{x})\in\Omega$.

\eqref{l:fixed points b}:~Let $\bar{x}\in \zer\left(\sum_{i=1}^nA_i\right)$. Then there exists $\mathbf{v}=(v_1,\dots,v_n)\in\Hilbert^n$ such that $_i\in A_i(\bar{x})$ and $\sum_{i=1}^nv_i=0$. Define the vector $\bar{\bz}=(z_1,\dots,z_{n-1})\in\Hilbert^{n-1}$ according to
$$ \begin{cases}
\bar{z}_1 := \bar{x}+v_1 \in (\Id+A_1)\bar{x}, \\
\bar{z}_i := v_{i} + \bar{z}_{i-1} =(\bar{x}+v_i)-\bar{x}+\bar{z}_{i-1} \in (\Id+A_i)(\bar{x}) - \bar{x}+\bar{z}_{i-1}\quad \forall i\in\integ{2}{n-1}.
   \end{cases}$$
Then $\bar{x}=J_{A_1}(z_1)$ and $\bar{x}=J_{A_i}(\bar{z}_i-\bar{z}_{i-1}+\bar{x})$ for $i\in\integ{2}{n-1}$. Furthermore, we have
$$ (\Id+A_n)(\bar{x})\ni \bar{x}+v_n = \bar{x}-\sum_{i=1}^{n-1}v_i = \bar{x} -(\bar{z}_1-\bar{x})-\sum_{i=2}^{n-1}(\bar{z}_i-\bar{z}_{i-1}) = 2\bar{x}-\bar{z}_{n-1},$$
which implies that $\bar{x}=J_{A_n}(2\bar{x}-\bar{z}_{n-1})$.
Altogether, we have $(\bar{\bz},\bar{x})\in\Omega$.

\eqref{l:fixed points c}:~Let $(\bar{\bz},\bar{x})\in\Omega$. It follows that $\bar{\bz}\in\Fix T$. Further, the definition of the resolvent implies
$$ \begin{cases}
    A_1(\bar{x})\ni \bar{z}_1-\bar{x} \\
    A_i(\bar{x})\ni\bar{z}_i-\bar{z}_{i-1} \quad \forall i\in\integ{2}{n-1}  \\
    A_n(\bar{x})\ni\bar{x}-z_{n-1}.
   \end{cases} $$
Summing together the above inclusions gives $\bar{x}\in \zer\left(\sum_{i=1}^nA_i\right)$, which completes the proof.
\end{proof}

\begin{lemma}\label{l:T_A ne}
Let $n\geq 2$, $A=(A_1,\dots,A_n)\in\mathcal{A}$ and $\gamma>0$. Then, for all $\bz=(z_1,\dots,z_n)\in\Hilbert^{n-1}$ and $\bar{\bz}=(\bar{z}_1,\dots,\bar{z}_n)\in\Hilbert^{n-1}$, we have
\begin{multline}\label{eq:T_A ne}
 \|T_A(\bz)-T_A(\bar{\bz})\|^2 + \frac{1-\gamma}{\gamma}\|(\Id-T_A)(\bz)-(\Id-T_A)(\bar{\bz})\|^2 \\
 + \frac{1}{\gamma}\bigl\|\sum_{i=1}^{n-1}(\Id-T_A)(\bz)_i-\sum_{i=1}^{n-1}(\Id-T_A)(\bar{\bz})_i\bigr\|^2 \leq \|\bz-\bar{\bz}\|^2.
\end{multline}
In particular, if $\gamma\in(0,1)$, then $T_A$ is $\gamma$-averaged
nonexpansive.
\end{lemma}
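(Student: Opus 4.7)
The plan is to derive (\ref{eq:T_A ne}) directly from the monotonicity of each operator $A_i$. Fix $\bz,\bar{\bz}\in\Hilbert^{n-1}$ and let $(x_i)_{i=1}^n$, $(\bar x_i)_{i=1}^n$ be the tuples produced from $\bz$, $\bar{\bz}$ by (\ref{eq:xi}). First I would make the arguments of the resolvents in (\ref{eq:xi}) explicit, namely $w_1:=z_1$, $w_i:=z_i-z_{i-1}+x_{i-1}$ for $i\in\integ{2}{n-1}$, and $w_n:=x_1+x_{n-1}-z_{n-1}$, so that the resolvent identity yields $u_i:=w_i-x_i\in A_i(x_i)$. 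With $\bar w_i,\bar u_i$ defined analogously from $\bar{\bz}$, monotonicity of each $A_i$ gives
\[
 S \;:=\; \sum_{i=1}^{n}\langle x_i-\bar x_i,\;u_i-\bar u_i\rangle \;\geq\; 0.
\]

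The key step is an algebraic identity rewriting $S$ in terms of the quantities appearing in (\ref{eq:T_A ne}). Abbreviating $\delta z_i:=z_i-\bar z_i$, $\delta x_i:=x_i-\bar x_i$, and $m_i:=\delta x_{i+1}-\delta x_i$ for $i\in\integ{1}{n-1}$, I would substitute the explicit formulas for $u_i-\bar u_i$ into $S$, collect like terms (a finite calculation involving only telescoping and completing the square), and verify that
\[
 -S \;=\; \sum_{i=1}^{n-1}\langle \delta z_i,\,m_i\rangle \;+\; \tfrac{1}{2}\sum_{i=1}^{n-1}\|m_i\|^2 \;+\; \tfrac{1}{2}\|\delta x_n-\delta x_1\|^2.
\]
Combined with $S\geq 0$ and multiplied by $2\gamma$, this yields
\[
 2\gamma\sum_{i=1}^{n-1}\langle \delta z_i,\,m_i\rangle + \gamma\sum_{i=1}^{n-1}\|m_i\|^2 + \gamma\|\delta x_n-\delta x_1\|^2 \;\leq\; 0.
\]

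To translate this into (\ref{eq:T_A ne}), I would expand the three norms on the LHS of (\ref{eq:T_A ne}) using (\ref{eq:T_A}). Directly, the $i$-th coordinate of $(\Id-T_A)(\bz)-(\Id-T_A)(\bar{\bz})$ equals $-\gamma m_i$; its sum over $i$ telescopes to $-\gamma(\delta x_n-\delta x_1)$; and $T_A(\bz)-T_A(\bar{\bz})=(\bz-\bar{\bz})+\gamma(m_1,\dots,m_{n-1})$. Plugging these in and expanding $\|T_A(\bz)-T_A(\bar{\bz})\|^2$, the LHS of (\ref{eq:T_A ne}) is seen to equal $\|\bz-\bar{\bz}\|^2$ plus precisely the left-hand side of the displayed inequality above, which gives (\ref{eq:T_A ne}). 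The \emph{in particular} claim follows immediately: since the third term on the LHS of (\ref{eq:T_A ne}) is nonnegative, dropping it leaves exactly the standard characterisation of $T_A$ being $\gamma$-averaged nonexpansive.

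The main obstacle is the bookkeeping behind the identity for $-S$. Monotonicity alone furnishes only open-ended bounds of the form $\sum\|\delta x_i\|^2 \leq \text{cross terms}$; the reason the calculation closes is the \emph{wrap-around} structure of the last line of (\ref{eq:xi}) --- specifically, the dependence of $w_n$ on $x_1$. This dependence supplies the $\langle \delta x_n,\delta x_1\rangle$ cross-term needed to combine the open telescoping chain $\sum_{i=1}^{n-1}\|\delta x_{i+1}-\delta x_i\|^2$ with the $\|\delta x_n-\delta x_1\|^2$ quantity that, after scaling by $\gamma$, appears as the third term on the LHS of (\ref{eq:T_A ne}). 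Verifying the identity thus amounts to a careful but elementary rearrangement, which should work uniformly for all $n\geq 2$.
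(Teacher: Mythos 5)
Your proposal is correct and follows essentially the same route as the paper: both sum the monotonicity inequalities of $A_1,\dots,A_n$ (exploiting the wrap-around dependence of the $n$-th resolvent argument on $x_1$), rewrite the result as a quadratic identity in $\delta z_i$ and $m_i=\delta x_{i+1}-\delta x_i$, and match it against the expansion of the three norms in \eqref{eq:T_A ne}. Your single closed-form identity for $-S$ checks out and merely streamlines the paper's term-by-term bookkeeping.
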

\begin{proof}
For convenience, denote $\bz^+:=T_A(\bz)$ and $\bar{\bz}^+:=T_A(\bar{\bz})$. Further, let $\bx=(x_1,\dots,x_n)\in\Hilbert^n$ be given by \eqref{eq:xi} and let $\bar{\bx}=(\bar{x}_1,\dots,\bar{x}_n)\in\Hilbert^n$ be given analogously. Since $z_1-x_1\in A_1(x_1)$ and $\bar{z}_1-\bar{x}_1\in A_1(\bar{x}_1)$, monotonicity of $A_1$ implies
\begin{equation}\label{eq:mono A1}
\begin{aligned}
 0 &\leq \langle x_1-\bar{x}_1,(z_1-x_1)-(\bar{z}_1-\bar{x}_1)\rangle \\
   &= \langle x_2-\bar{x}_1,(z_1-x_1)-(\bar{z}_1-\bar{x}_1)\rangle + \langle x_1-x_2,(z_1-x_1)-(\bar{z}_1-\bar{x}_1)\rangle.
\end{aligned}
\end{equation}
For $i\in\integ{2}{n-1}$, $z_i-z_{i-1}+x_{i-1}-x_i\in A_i(x_i)$ and $\bar{z}_i-\bar{z}_{i-1}+\bar{x}_{i-1}-\bar{x}_i\in A_i(\bar{x}_i)$. Thus monotonicity of $A_i$ yields
\begin{align*}
 0 &\leq \langle x_i-\bar{x}_i,(z_i-z_{i-1}+x_{i-1}-x_i)-(\bar{z}_i-\bar{z}_{i-1}+\bar{x}_{i-1}-\bar{x}_i)\rangle \\
   &= \langle x_i-\bar{x}_i,(z_i-x_i)-(\bar{z}_i-\bar{x}_i)\rangle  - \langle x_i-\bar{x}_i,(z_{i-1}-x_{i-1})-(\bar{z}_{i-1}-\bar{x}_{i-1})\rangle\\
   &= \langle x_{i+1}-\bar{x}_i,(z_i-x_i)-(\bar{z}_i-\bar{x}_i)\rangle + \langle x_i-x_{i+1},(z_i-x_i)-(\bar{z}_i-\bar{x}_i)\rangle \\
   &\hspace{1cm} - \langle x_i-\bar{x}_{i-1},(z_{i-1}-x_{i-1})-(\bar{z}_{i-1}-\bar{x}_{i-1})\rangle-\langle \bar{x}_{i-1}-\bar{x}_i,(z_{i-1}-x_{i-1})-(\bar{z}_{i-1}-\bar{x}_{i-1})\rangle.
\end{align*}
Summing this inequality for $i\in\integ{2}{n-1}$ and simplifying gives
\begin{multline}\label{eq:mono Ai}
0 \leq  \langle x_{n}-\bar{x}_{n-1},(z_{n-1}-x_{n-1})-(\bar{z}_{n-1}-\bar{x}_{n-1}\rangle - \langle x_2-\bar{x}_{1},(z_{1}-x_{1})-(\bar{z}_{1}-\bar{x}_{1})\rangle \\
   +\sum_{i=2}^{n-1}\langle x_i-x_{i+1},(z_i-x_i)-(\bar{z}_i-\bar{x}_i)\rangle-\sum_{i=1}^{n-2}\langle \bar{x}_{i}-\bar{x}_{i+1},(z_{i}-x_{i})-(\bar{z}_{i}-\bar{x}_{i})\rangle.
\end{multline}
Since $x_1+x_{n-1}-x_n-z_{n-1}\in A_n(x_n)$ and $\bar{x}_1+\bar{x}_{n-1}-\bar{x}_n-\bar{z}_{n-1}\in A_n(\bar{x}_n)$, monotonicity of $A_n$ gives
\begin{equation}\label{eq:mono An}
\begin{aligned}
 0 &\leq \langle x_n-\bar{x}_n,(x_1+x_{n-1}-x_n-z_{n-1})-(\bar{x}_1+\bar{x}_{n-1}-\bar{x}_n-\bar{z}_{n-1})\rangle  \\
   &=  \langle x_n-\bar{x}_n,(x_{n-1}-z_{n-1})-(\bar{x}_{n-1}-\bar{z}_{n-1})\rangle + \langle x_n-\bar{x}_n,(x_1-\bar{x}_1)-(x_n-\bar{x}_n)\rangle
        \\
   &= -\langle x_n-\bar{x}_{n-1},(z_{n-1}-x_{n-1})-(\bar{z}_{n-1}-\bar{x}_{n-1})\rangle \\
   &\qquad - \langle \bar{x}_n-\bar{x}_{n-1},(z_{n-1}-x_{n-1})-(\bar{z}_{n-1}-\bar{x}_{n-1})\rangle \\
   &\qquad\qquad  + \frac{1}{2}\left(\|x_1-\bar{x}_1\|^2-\|x_n-\bar{x}_n\|^2-\|(x_1-x_n)-(\bar{x}_1-\bar{x}_n)\|^2\right).
\end{aligned}
\end{equation}
Adding \eqref{eq:mono A1}, \eqref{eq:mono Ai} and \eqref{eq:mono An} and rearranging gives
\begin{multline}\label{eq:before key}
0\leq \sum_{i=1}^{n-1}\langle (x_i-\bar{x}_i)-(x_{i+1}-\bar{x}_{i+1}),\bar{x}_i-x_i\rangle-\sum_{i=1}^{n-1}\langle (x_i-\bar{x}_i)-(x_{i+1}-\bar{x}_{i+1}),z_i-\bar{z}_i\rangle \\
+ \frac{1}{2}\left(\|x_1-\bar{x}_1\|^2-\|x_n-\bar{x}_n\|^2-\|(x_1-x_n)-(\bar{x}_1-\bar{x}_n)\|^2\right).
\end{multline}
The first term in \eqref{eq:before key} can be expressed as
\begin{equation}\label{eq:ip x}
\begin{aligned}
&\sum_{i=1}^{n-1}\langle (x_i-\bar{x}_i)-(x_{i+1}-\bar{x}_{i+1}),\bar{x}_i-x_i\rangle\\
&= \frac{1}{2}\sum_{i=1}^{n-1}\left(\|x_{i+1}-\bar{x}_{i+1}\|^2-\|x_i-\bar{x}_i\|^2-\|(x_i-x_{i+1})-(\bar{x}_i-\bar{x}_{i+1})\|^2 \right) \\
&= \frac{1}{2}\left(\|x_n-\bar{x}_n\|^2-\|x_1-\bar{x}_1\|^2 - \frac{1}{\gamma^2}\sum_{i=1}^{n-1}\|(z_i-z_{i}^+)-(\bar{z}_i-\bar{z}^+)\|^2\right) \\
&= \frac{1}{2}\left(\|x_n-\bar{x}_n\|^2-\|x_1-\bar{x}_1\|^2 - \frac{1}{\gamma^2}\|(\bz-\bz^+)-(\bar{\bz}-\bar{\bz}^+)\|^2\right),
\end{aligned}
\end{equation}
and the second term in \eqref{eq:before key} can be expressed as
\begin{equation}\label{eq:ip xx}
\begin{aligned}
&\sum_{i=1}^{n-1}\langle (x_i-x_{i+1})-(\bar{x}_i-\bar{x}_{i+1}),z_i-\bar{z}_i\rangle \\
&= \frac{1}{\gamma}\sum_{i=1}^{n-1}\langle (z_i-z_i^+)-(\bar{z}_i-\bar{z}_i^+),z_i-\bar{z}_i\rangle \\
&= \frac{1}{\gamma}\langle (\bz-\bz^+)-(\bar{\bz}-\bar{\bz}^+),\bz-\bar{\bz}\rangle \\
&= \frac{1}{2\gamma}\left(\|(\bz-\bz^+)-(\bar{\bz}-\bar{\bz}^+)\|^2+\|\bz-\bar{\bz}\|^2-\|\bz^+-\bar{\bz}^+\|^2 \right).
\end{aligned}
\end{equation}
Thus, substituting \eqref{eq:ip x} and \eqref{eq:ip xx} into \eqref{eq:before key} and simplifying gives
\begin{multline}\label{eq:2nd last}
\|\bz^+-\bar{\bz}^+\|^2+\left(\frac{1}{\gamma}-1\right)\|(\bz-\bz^+)-(\bar{\bz}-\bar{\bz}^+)\|^2
+ \gamma\|(x_1-x_n)-(\bar{x}_1-\bar{x}_n)\|^2
 \leq \|\bz-\bar{\bz}\|^2.
\end{multline}
Note that \eqref{eq:xi} implies
$$\gamma(x_1-x_n)-\gamma(\bar{x}_1-\bar{x}_n) = \gamma\sum_{i=1}^{n-1}(x_i-x_{i+1})-\gamma\sum_{i=1}^n(\bar{x}_i-\bar{x}_{i+1}) = \sum_{i=1}^{n-1}(z_i-z_i^+)-\sum_{i=1}^{n-1}(\bar{z}_i-\bar{z}_i^+).$$
Substituting this  into \eqref{eq:2nd last} gives \eqref{eq:T_A ne}, which completes the proof.
\end{proof}
\begin{remark}
In general, Lemma~\ref{l:T_A ne} cannot be improved in the sense that
$T_A$ need not be averaged nonexpansive if $\gamma\geq 1$. Indeed, consider the setting with $n\geq 3$, $\gamma=1$, and $A_1=\dots=A_n=0$. Then $J_{A_1}=J_{A_2}=\dots=J_{A_n}=\Id$ and hence
$$ T_A(\bz) = T_A\begin{pmatrix}
z_1 \\
z_2 \\
\vdots \\
z_{n-2} \\
z_{n-1} \\
\end{pmatrix} = \begin{pmatrix}
z_2 \\
z_3 \\
\vdots \\
z_{n-1} \\
z_1 \\
\end{pmatrix}.$$
Consequently, $T_A$ is an isometry and hence not averaged nonexpansive.

However, in the special case when $n=2$, Lemma~\ref{l:T_A ne} recovers the known result that $T_A$ is averaged nonexpansive when $\gamma\in(0,2)$. Indeed, if $n=2$, then
$$ \bigl\|\sum_{i=1}^{n-1}(z_i-z_i^+)-\sum_{i=1}^{n-1}(\bar{z}_i-\bar{z}_i^+)\bigr\|^2 = \bigl\|(z_1-z_1^+)-(\bar{z}_1-\bar{z}_1^+)\bigr\|^2. $$
Consequently, the second and third terms in \eqref{eq:T_A ne} can be combined to give
$$ \|T_A(z)-T_A(\bar{z})\|^2 + \frac{2-\gamma}{\gamma}\|(\Id-T_A)(z)-(\Id-T_A)(\bar{z})\|^2 \leq \|z-\bar{z}\|^2. $$
Thus, if $n=2$ and $\gamma\in(0,2)$, then $T_A$ is $\frac{\gamma}{2}$-averaged nonexpansive.
\end{remark}
 
The following theorem, which is concerned with convergence of Algorithm~\ref{alg}, is our main result for this section.  
  
\begin{algorithm}[!ht]
\caption{Minimal resolvent splitting for finding a zero of $\sum_{i=1}^nA_i$ when $n\geq 2$. \label{alg}}
\SetKwInOut{Input}{Input}
\Input{Choose $\bz^0=(z^0_1,\dots,z^0_{n-1})\in\Hilbert^{n-1}$ and $\gamma\in(0,1)$.}
\smallskip
\For{$k=1,2,\dots$}{
Compute $\bz^{k+1}=(z_1^{k+1},\dots,z_{n-1}^{k+1})\in\Hilbert^{n-1}$ according to
\begin{equation}\label{eq:th T_A}
 \bz^{k+1} = T_A(\bz^k) = \bz^k  + \gamma \begin{pmatrix}
x_2^k-x_1^k \\
x_3^k-x_2^k \\
\vdots \\
x_{n}^k-x_{n-1}^k \\
\end{pmatrix},
\end{equation}
where $\bx^k=(x_1^k,\dots,x_n^k)\in \Hilbert^n$ is given by
\begin{equation}\label{eq:th J_A}
 \begin{cases}
x_1^k=J_{A_1}(z_1^k), \\
x_i^k=J_{A_i}(z_i^k-z_{i-1}^k+x_{i-1}^k) \qquad \forall i\in\integ{2}{n-1}, \\
x_n^k=J_{A_n}(x_1^k+x_{n-1}^k-z_{n-1}^k). \\
\end{cases}
\end{equation}
}
\end{algorithm}

\begin{theorem}\label{th:main}
Let $n\geq 2$, let $A=(A_1,\dots,A_n)\in\mathcal{A}_n$ with $\zer\left(\sum_{i=1}^nA_i\right)\neq\varnothing$, and let $\gamma\in(0,1)$. Given $\bz^0\in\Hilbert^{n-1}$, let $(\bz^k)\subseteq\Hilbert^{n-1}$ and $(\bx^k)\subseteq\Hilbert^n$ be the sequences given by \eqref{eq:th T_A} and \eqref{eq:th J_A}, respectively.
Then the following assertions hold.
\begin{enumerate}[(a)]
\item The sequence $(\bz^k)$ converges weakly to a point $\bz\in\Fix T_A$.
\item The sequence $(\bx^k)$ converges weakly to a point $(x,\dots,x)\in\Hilbert^n$ with $x\in\zer\left(\sum_{i=1}^nA_i\right)$.
\end{enumerate}
\end{theorem}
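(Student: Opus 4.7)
The proof of (a) is immediate: by Lemma~\ref{l:T_A ne}, $T_A$ is $\gamma$-averaged nonexpansive for $\gamma \in (0,1)$, and by Lemma~\ref{l:fixed points}, $\Fix T_A \neq \varnothing$ whenever $\zer(\sum_{i=1}^n A_i) \neq \varnothing$. The Krasnosel'skii--Mann theorem for Picard iteration of averaged nonexpansive operators then yields $\bz^k \wto \bar{\bz}$ for some $\bar{\bz} \in \Fix T_A$.

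For (b), the first task is to establish asymptotic regularity. Applying \eqref{eq:T_A ne} with the second argument in $\Fix T_A$ gives Fej\'er monotonicity of $(\bz^k)$, and telescoping yields summability of $\|\bz^k - \bz^{k+1}\|^2$. Since \eqref{eq:th T_A} gives $z_i^k - z_i^{k+1} = \gamma(x_{i+1}^k - x_i^k)$, we conclude $x_{i+1}^k - x_i^k \to 0$ strongly for every $i \in \integ{1}{n-1}$ and, more generally, $x_i^k - x_j^k \to 0$ strongly for all $i,j$. Fej\'er monotonicity bounds $(\bz^k)$ and, by the nonexpansiveness of each $J_{A_i}$ in \eqref{eq:th J_A}, $(\bx^k)$ is also bounded, so weak cluster points of $(\bx^k)$ exist, each necessarily of the form $(\tilde x, \dots, \tilde x)$.

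To identify such a cluster point, use Lemma~\ref{l:fixed points}(a): the weak limit $\bar{\bz}$ of $(\bz^k)$ has an associated $\bar x \in \zer(\sum_{i=1}^n A_i)$ with $(\bar{\bz}, \bar x) \in \Omega$, and in particular $\bar z_1 - \bar x \in A_1(\bar x)$. Fix any subsequence along which $\bx^{k_j} \wto (\tilde x, \dots, \tilde x)$. The residual $a_1^{k_j} := z_1^{k_j} - x_1^{k_j}$ belongs to $A_1(x_1^{k_j})$ and satisfies $a_1^{k_j} \wto \bar z_1 - \tilde x$; upgrading this weak-weak convergence into the inclusion $\bar z_1 - \tilde x \in A_1(\tilde x)$ and then invoking monotonicity of $A_1$ at the two graph points $(\bar x, \bar z_1 - \bar x)$ and $(\tilde x, \bar z_1 - \tilde x)$ yields $-\|\bar x - \tilde x\|^2 \geq 0$, forcing $\tilde x = \bar x$. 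Since every weak cluster point of $(\bx^k)$ equals $(\bar x, \dots, \bar x)$, the entire sequence converges weakly to $(\bar x, \dots, \bar x)$.

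The technically delicate step is precisely the just-mentioned upgrade: passing to the limit in $a_1^{k_j} \in A_1(x_1^{k_j})$ when both components converge only weakly, since graphs of maximally monotone operators are weak-to-strong sequentially closed but not weak-to-weak in general. As in Douglas--Rachford analysis (which is recovered here by the case $n=2$), the plan is to rely on a $\limsup$-type inner product estimate drawing on the Fej\'er decrement from Lemma~\ref{l:T_A ne} together with the additional strong convergence $\sum_{i=1}^n a_i^k = x_1^k - x_n^k \to 0$ arising from the structural identity for the residuals of the algorithm.
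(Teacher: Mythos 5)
Your part (a) and the first half of part (b) (asymptotic regularity, boundedness of $(\bx^k)$, strong convergence of $x_i^k-x_j^k\to 0$, and the observation that every weak cluster point of $(\bx^k)$ lies on the diagonal) coincide with the paper's argument. The genuine gap is exactly at the step you flag as ``technically delicate'': you never actually close it, and the plan you sketch for closing it is not of the right shape. You propose to obtain the inclusion $\bar z_1-\tilde x\in A_1(\tilde x)$ from $z_1^{k_j}-x_1^{k_j}\in A_1(x_1^{k_j})$ via a $\limsup$ inner-product estimate \emph{for $A_1$ in isolation}. The required condition would be $\limsup_j\langle x_1^{k_j}-\tilde x,\,(z_1^{k_j}-x_1^{k_j})-(\bar z_1-\tilde x)\rangle\le 0$, and there is no way to verify this for a single operator: the terms $\langle x_1^{k_j},z_1^{k_j}\rangle$ and $\|x_1^{k_j}\|^2$ have no controllable limit under weak convergence alone. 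The only quantity the algorithm makes strongly convergent is the \emph{total} residual $\sum_{i=1}^n a_i^k=x_1^k-x_n^k\to 0$ (which you correctly record), and this is only useful after summing the graph pairs of all $n$ operators, so that $\sum_i\langle x_i^k-\tilde x,\,a_i^k-u_i\rangle$ telescopes (modulo the vanishing differences $x_i^k-x_1^k$) to $\langle x_1^k-\tilde x,\,x_1^k-x_n^k\rangle\to 0$. A per-operator argument cannot succeed.

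The paper's device for making this joint passage to the limit rigorous is to rewrite the $n$ resolvent relations \eqref{eq:th J_A} as the single inclusion \eqref{eq:demiclosed2} for the operator $\bigl(A_1^{-1},\dots,A_{n-1}^{-1},A_n\bigr)$ plus a skew-symmetric bounded linear map. This sum is maximally monotone (the linear part has full domain), hence its graph is sequentially weak--strong closed; crucially, the inclusion is arranged so that the strongly convergent vector $(x_1^k-x_n^k,\dots,x_{n-1}^k-x_n^k,x_1^k-x_n^k)\to 0$ sits on the ``value'' side while the merely weakly convergent quantities sit on the ``argument'' side. Taking limits along the subsequence then yields $A_1(x)\ni z_1-x$, $A_i(x)\ni z_i-z_{i-1}$, $A_n(x)\ni x-z_{n-1}$, i.e.\ $(\bz,x)\in\Omega$ with $x=J_{A_1}(z_1)$ uniquely determined, after which your uniqueness-of-cluster-point conclusion is fine (your monotonicity computation forcing $\tilde x=\bar x$ is just single-valuedness of $J_{A_1}$ and is correct once the inclusion is in hand). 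If you prefer to stay with a $\limsup$ argument, the standard criterion for sums of graph pairs of several maximally monotone operators (Bauschke--Combettes, Proposition~26.5-type results) applied to all $n$ operators simultaneously would also work; either way, the joint treatment is the missing idea.
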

\begin{proof}
Since $\zer\left(\sum_{i=1}^nA_i\right)\neq\varnothing$, Lemma~\ref{l:fixed points}\eqref{l:fixed points b} implies $\Fix T_A\neq\varnothing$. Since $\gamma\in(0,1)$,  Lemma~\ref{l:T_A ne} implies $T_A$ is $\gamma$-averaged nonexpansive. By applying \cite[Theorem~5.15]{bauschkecombettes}, we therefore deduce that $(\bz^k)$ converges weakly to a point $\bz\in\Fix T_A$ and that $\lim_{k\to\infty}\|\bz^{k+1}-\bz^k\|=0$.

By Lemma~\ref{l:fixed points}\eqref{l:fixed points a}, there exists $\bar{x}\in\Hilbert$ such that $(\bz,\bar{x})\in\Omega$. By nonexpansivity of resolvents and boundedness of $(\bz^k)$, it follows that $(\bx^k)$ is also bounded. Further, \eqref{eq:th T_A} and the fact that $\lim_{k\to\infty}\|\bz^{k+1}-\bz^k\|=0$ implies that
\begin{equation}\label{eq:xki}
 \lim_{k\to\infty}\|x_{i}^k-x_{i-1}^k\|=0\quad\forall i=2,\dots, n.
\end{equation}

Next, using the definition of the resolvent, \eqref{eq:th J_A} implies
\begin{equation*}
\left\{\begin{array}{rll}
A^{-1}_1(z_1^k-x_1^k)&\ni x_1^k  & \\
A_i^{-1}\bigl((z_i^k-x_i^k)-(z_{i-1}^k-x_{i-1}^k)\bigr)  &\ni x_i^k & \forall i\in\integ{2}{n-1} \\
A_n(x_n^k)  &\ni x_1^k-x_n^k-(z_{n-1}^k-x_{n-1}^k) & \\
\end{array}\right.
\end{equation*}
which can be written as the inclusion
\begin{equation}\label{eq:demiclosed2}
\hspace{-0.35cm}\left[\begin{pmatrix}
A_1^{-1}\\ A_2^{-1} \\ \vdots \\ A_{n-1}^{-1} \\ A_n\\
\end{pmatrix} + \begin{pmatrix}
0 & 0 & \dots & 0 & -\Id \\
0 & 0 & \dots & 0 & -\Id \\
\vdots & \vdots & \ddots & \vdots & \vdots \\
0 & 0 & \dots & 0 & -\Id \\
\Id & \Id & \dots & \Id & 0 \\
\end{pmatrix}\right]
\begin{pmatrix}
z_1^k-x_1^k \\
(z_2^k-x_2^k)-(z_{1}^k-x_{1}^k) \\
\vdots \\
(z_{n-1}^k-x_{n-1}^k)-(z_{n-2}^k-x_{n-2}^k) \\
x_n^k \\
\end{pmatrix} \ni
\begin{pmatrix}
x_1^k-x_n^k \\
x_2^k-x_n^k \\
\vdots \\
x_{n-1}^k-x_n^k\\
x_1^k-x_n^k \\
\end{pmatrix}.
\end{equation}
Note that the operator in the inclusion \eqref{eq:demiclosed2} is maximally monotone as the sum of two maximally monotone operators, the latter having full domain \cite[Corollary~24.4(i)]{bauschkecombettes}. Consequently, it is demiclosed \cite[Proposition~20.32]{bauschkecombettes}. That is, its graph is sequentially closed in the weak-strong topology.

Let $\mathbf{w}\in\Hilbert^{n}$ be an arbitrary weak cluster point of the sequence $(\bx^k)$. As a consequence of \eqref{eq:xki}, $\mathbf{w}=(x,\dots,x)$ for some $x\in\Hilbert$. Taking the limit along a subsequence of $(\bx^k)$ which converges weakly to $\mathbf{w}$ in \eqref{eq:demiclosed2}, using demiclosedness and unravelling the resulting expression gives
\begin{equation*}
\left\{\begin{array}{rll}
    A_1(x) &\ni z_1-x \\
    A_i(x) &\ni z_i-z_{i-1} & \forall i\in\integ{2}{n-1} \\
    A_n(x) &\ni x-z_{n-1},
   \end{array}\right. 
\end{equation*}   
which implies $(\bz,x)\in\Omega$. In particular, $\bz\in\Fix T_A$ and $x=J_{A_1}(z_1)\in\zer\left(\sum_{i=1}^nA_i\right)$.

In other words, $\mathbf{w}=(x,\dots,x)\in\Hilbert^n$ with $x:=J_{A_1}(z_1)$ is the unique weak sequential cluster point of the bounded sequence $(\bx^k)$. We therefore deduce that $(\bx^k)$ converges weakly to $\mathbf{w}$, which completes the proof.
\end{proof}

The following corollary, which attains the lower bound given in Theorem~\ref{th:lifting}, is an immediate consequence of Theorem~\ref{th:main}. Thus, in the terminology of Ryu~\cite{ryu2020uniqueness}, the scheme described by Theorem~\ref{th:main} has ``minimal lifting''.

\begin{corollary}
Let $n\geq 2$. There exists a frugal resolvent splitting $(T_A,S_A)$ for $\mathcal{A}_n$ with $(n-1)$-fold lifting. Moreover, there exists no frugal resolvents splitting for $\mathcal{A}_n$ with $d$-fold lifting for $d\leq n-2$.
\end{corollary}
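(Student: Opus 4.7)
The plan is to assemble the corollary directly from the two principal results of the preceding sections, each of which handles exactly one of its two assertions. For the negative half (no $d$-fold lifting with $d\leq n-2$), I would simply invoke Theorem~\ref{th:lifting}, which states that every frugal resolvent splitting for $\mathcal{A}_n$ must satisfy $d\geq n-1$; the contrapositive gives exactly the claimed non-existence.

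For the positive half, I would exhibit a concrete $(T_A,S_A)$ and verify it meets every clause of Definitions~\ref{def:resolvent splitting} and~\ref{def:frugal}. The natural candidate is the pair in which $T_A\colon\Hilbert^{n-1}\to\Hilbert^{n-1}$ is given by \eqref{eq:T_A}--\eqref{eq:xi} and the solution map $S_A\colon\Hilbert^{n-1}\to\Hilbert$ is defined by $S_A(\bz)=x_1=J_{A_1}(z_1)$, with $x_1$ being the first coordinate computed inside $T_A(\bz)$. Three properties must be checked. First, the lifting count is immediate from the signatures $T_A\colon\Hilbert^{n-1}\to\Hilbert^{n-1}$ and $S_A\colon\Hilbert^{n-1}\to\Hilbert$. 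Second, inspection of \eqref{eq:xi} shows that each of $J_{A_1},\dots,J_{A_n}$ is evaluated exactly once and that $T_A(\bz)$ and $S_A(\bz)$ are then formed from $\bz$ and $\bx=(x_1,\dots,x_n)$ using only vector addition and scalar multiplication; crucially, $S_A(\bz)$ reuses the same resolvent evaluations as $T_A(\bz)$, so the frugality bookkeeping demanded by Definition~\ref{def:frugal} is respected.

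Third, one must confirm the fixed point encoding property. Lemma~\ref{l:fixed points} already gives the biconditional $\Fix T_A\neq\varnothing\iff\zer(\sum_{i=1}^nA_i)\neq\varnothing$ directly. For the implication $\bz\in\Fix T_A\Rightarrow S_A(\bz)\in\zer(\sum_{i=1}^nA_i)$, take $\bz\in\Fix T_A$; by Lemma~\ref{l:fixed points}\eqref{l:fixed points a} there exists $\bar x\in\Hilbert$ with $(\bz,\bar x)\in\Omega$, and the first defining equation of $\Omega$ forces $\bar x=J_{A_1}(z_1)=S_A(\bz)$, while Lemma~\ref{l:fixed points}\eqref{l:fixed points c} yields $\bar x\in\zer(\sum_{i=1}^nA_i)$. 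Combining the two halves yields the corollary.

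I do not anticipate any serious obstacle: both the lower bound and the matching construction, including its convergence and fixed-point characterisation, are in place, and the corollary amounts to packaging them. The only mildly delicate point is recording that the frugality condition is genuinely shared between $T_A$ and $S_A$ (so one does not inadvertently count resolvent evaluations twice), which is handled by defining $S_A$ to return a quantity that is already computed during the evaluation of $T_A$.
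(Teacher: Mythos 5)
Your proposal is correct and follows essentially the same route as the paper: combine the lower bound of Theorem~\ref{th:lifting} with the explicit $(n-1)$-fold scheme \eqref{eq:T_A}--\eqref{eq:xi}, taking $S_A(\bz)=J_{A_1}(z_1)$ and checking the fixed point encoding via Lemma~\ref{l:fixed points}. The only cosmetic difference is that the paper cites Theorem~\ref{th:main} as the source of the positive half, whereas you appeal directly to Lemma~\ref{l:fixed points}, which is in fact the more economical reference since the fixed-point-encoding property does not require the convergence statement.
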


The following remark comments on the difficulties of extending \cite[Theorem~4]{ryu2020uniqueness} to a four operator scheme. Such an extension seems not to be straightforward without additional assumptions.
\begin{remark}[Extensions of Ryu splitting]\label{r:extension to ryu}
Let $A=(A_1,A_2,A_3,A_4)\in\mathcal{A}_4$ and $\gamma\in(0,1)$. Consider the
operator $T\colon\Hilbert^3\to\Hilbert^3$ given by
\begin{equation}\label{eq:4op ryu}
 T(\bz) = \bz + \gamma\begin{pmatrix}
                     x_4-x_1 \\
                     x_4-x_2 \\
                     x_4-x_3 \\
                   \end{pmatrix}\text{~~where~~}\left\{\begin{aligned}
                                                 x_1 &= J_{A_1}(z_1) \\
                                                 x_2 &= J_{A_2}(z_2+x_1) \\
                                                 x_3 &= J_{A_3}(z_3+x_2-x_1) \\
                                                 x_4 &= J_{A_4}(x_1-z_1+x_2-z_2+x_3-z_3), \\
                                                \end{aligned}\right.
\end{equation}
which can be considered as a four operator extension of \eqref{eq:ryu our notation}. By using the definition of the resolvents, it can be shown that for this scheme
 $$ \bz\in\Fix T\implies x_1=x_2=x_3=x_4\in\zer\left(A_1+A_2+A_3+A_4\right). $$
However, its fixed point iteration does not converge for all $A=(A_1,A_2,A_3,A_4)\in\mathcal{A}_4$ with $\zer(A_1+A_2+A_3+A_4)\neq\varnothing$. To see this, let $\Hilbert=\mathbb{R}$ and take $A_1=A_2=A_3=A_4=0$, so that $J_{A_1}=J_{A_2}=J_{A_3}=J_{A_4}=\Id$. In this case, $T(\bz)$ simplifies to
 $$ T(\bz) = \bz + \gamma\begin{pmatrix}
                     z_2 \\
                     0 \\
                     z_1+z_3 \\
                   \end{pmatrix} = P\bz\text{~~where~~}P:=\begin{bmatrix}
                   1 & \gamma & 0 \\
                   0 & 1 & 0 \\
                   1 & 0 & 1+\gamma \\
                   \end{bmatrix}. $$
In particular, if $\bz=(0,0,1)$, then $T^k(\bz)=P^k\bz = (1+\gamma)^k\bz$ which diverges as $k\to\infty$.

Interestingly, the operator described by \eqref{eq:4op ryu} can be shown to be
$\gamma$-averaged nonexpansive if the fourth operator $A_4$ is $1$-strongly
monotone, rather than just monotone. Further, if $A_4$ is $\beta$-strongly monotone
for some $\beta\in(0,1)$, then $\frac{1}{\beta}A_4$ is $1$-strongly monotone. In
this case, \eqref{eq:4op ryu} can instead be applied the operators
$\frac{1}{\beta}A_1,\frac{1}{\beta}A_2, \frac{1}{\beta}A_3$,
$\frac{1}{\beta}A_4$ where we note that
$\zer\left(\sum_{i=1}^4A_i\right)=\zer\left(\sum_{i=1}^4\frac{1}{\beta}A_i\right)$.
\end{remark}

\begin{remark}\label{re:x1-xn}
Since the sequence $(\bx^k)$ is weakly convergent to $(x,\dots,x)$, it follows that  $x_i^k-x_j^k\wto 0$ as $k\to+\infty$ for all $i,j\in\{1,\dots,n\}$. However, \eqref{eq:xki}  shows convergence is actually strong. That is, we have
$$\lim_{k\to\infty}\|x_i^k-x_j^k\|=0\quad \forall i,j\in\{1,\dots,n\}. $$
\end{remark}

\subsection{The Limiting Case under Uniform Monotonicity}

We conclude this section with the following theorem which deals with the limiting case in Theorem~\ref{th:main}. That is, the case where $\gamma=1$.
\begin{theorem}\label{th:main uniform mono}
Let $n\geq 2$, let $A=(A_1,\dots,A_n)\in\mathcal{A}_n$ with $\zer\left(\sum_{i=1}^nA_i\right)\neq\varnothing$, and let $\gamma\in(0,1]$. Further suppose $A_i\colon\Hilbert\setto\Hilbert$ is uniformly monotone with modulus $\phi_i$ for all $i\in\{2,\dots,n\}$. Given $\bz^0\in\Hilbert^{n-1}$, let $(\bz^k)\subseteq\Hilbert^{n-1}$ and $(\bx^k)\subseteq\Hilbert^n$ be the sequences given by \eqref{eq:th T_A} and \eqref{eq:th J_A}, respectively.
Then the following assertions hold.
\begin{enumerate}[(a)]
\item\label{it:uni mono a} The sequence $(\bz^k)$ converges weakly to a point $\bz\in\Fix T_A$.
\item\label{it:uni mono b} The sequence $(\bx^k)$ converges strongly to a point $(x,\dots,x)\in\Hilbert^n$ with $x\in\zer\left(\sum_{i=1}^nA_i\right)$.
\end{enumerate}
\end{theorem}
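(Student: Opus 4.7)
The plan is threefold: first, strengthen Lemma~\ref{l:T_A ne} by exploiting the uniform monotonicity of $A_2,\dots,A_n$; second, use the resulting extra terms to obtain strong convergence of $(\bx^k)$, giving part~\eqref{it:uni mono b}; third, recover weak convergence of $(\bz^k)$ via the standard Fej\'er-monotonicity/Opial argument, giving part~\eqref{it:uni mono a}. Before any of this, observe that since $A_2$ is uniformly monotone, the sum $\sum_{i=1}^n A_i$ is uniformly (hence strictly) monotone, so its zero is unique; denote it by $x^\ast$. By Lemma~\ref{l:fixed points}\eqref{l:fixed points b}, $\Fix T_A\neq\varnothing$, and for any $\bar{\bz}\in\Fix T_A$ with associated $\bar{\bx}$ given by \eqref{eq:xi}, membership in $\Omega$ forces $\bar{x}_1=\dots=\bar{x}_n=x^\ast$.

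The first step is to revisit the proof of Lemma~\ref{l:T_A ne}: the monotonicity of each $A_i$ enters only through \eqref{eq:mono A1}, \eqref{eq:mono Ai} and \eqref{eq:mono An}. Under uniform monotonicity of $A_i$ with modulus $\phi_i$ for $i\in\{2,\dots,n\}$, each of those inequalities picks up an additive lower bound $\phi_i(\|x_i-\bar{x}_i\|)$ on its left-hand side. Propagating these terms through the unchanged algebra (which at the end is multiplied by $2\gamma$) adds the nonnegative quantity $2\gamma\sum_{i=2}^n\phi_i(\|x_i-\bar{x}_i\|)$ to the LHS of \eqref{eq:T_A ne}. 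Specialising to $\bar{\bz}\in\Fix T_A$, $\bz=\bz^k$ and using the telescoping identity $\sum_{i=1}^{n-1}(\Id-T_A)(\bz^k)_i=\gamma(x_1^k-x_n^k)$ yields
\begin{equation*}
\|\bz^{k+1}-\bar{\bz}\|^2+\frac{1-\gamma}{\gamma}\|\bz^k-\bz^{k+1}\|^2+\gamma\|x_1^k-x_n^k\|^2+2\gamma\sum_{i=2}^n\phi_i(\|x_i^k-x^\ast\|)\leq\|\bz^k-\bar{\bz}\|^2.
\end{equation*}
Since $\gamma\in(0,1]$, every added term is nonnegative, so $(\bz^k)$ is Fej\'er monotone with respect to $\Fix T_A$ and therefore bounded. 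Telescoping over $k$ gives $\sum_k\|x_1^k-x_n^k\|^2<\infty$ and $\sum_k\phi_i(\|x_i^k-x^\ast\|)<\infty$ for each $i\in\{2,\dots,n\}$; since $\phi_i$ is nondecreasing and vanishes only at~$0$, we deduce $x_i^k\to x^\ast$ strongly for $i\in\{2,\dots,n\}$, and then $\|x_1^k-x^\ast\|\leq\|x_1^k-x_n^k\|+\|x_n^k-x^\ast\|\to 0$ yields strong convergence for $i=1$ too, establishing~\eqref{it:uni mono b}.

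Finally, \eqref{eq:th T_A} gives $\|\bz^{k+1}-\bz^k\|^2=\gamma^2\sum_{i=1}^{n-1}\|x_{i+1}^k-x_i^k\|^2\to 0$ thanks to the strong convergence of $(\bx^k)$. Lemma~\ref{l:T_A ne} in the range $\gamma\in(0,1]$ makes $T_A$ nonexpansive, so Browder's demiclosedness principle ensures that every weak cluster point of the bounded sequence $(\bz^k)$ lies in $\Fix T_A$; combined with Fej\'er monotonicity, Opial's lemma then delivers weak convergence of $(\bz^k)$ to a point in $\Fix T_A$, which proves~\eqref{it:uni mono a}. The main obstacle is the careful bookkeeping required to track the $\phi_i$ terms through the algebraic manipulations in the proof of Lemma~\ref{l:T_A ne}; once the strengthened inequality is in hand, the remainder is a routine consequence of uniform-monotonicity-driven summability plus standard Fej\'er/Opial technology.
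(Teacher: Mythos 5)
Your proposal is correct and follows essentially the same strategy as the paper's proof: strengthen Lemma~\ref{l:T_A ne} by carrying the uniform-monotonicity moduli through the same algebra, specialise to $\bar{\bz}\in\Fix T_A$ to obtain a Fej\'er-type inequality with the extra nonnegative terms $\gamma\|x_1^k-x_n^k\|^2$ and $2\gamma\sum_{i=2}^n\phi_i(\|x_i^k-x\|)$, deduce strong convergence of $(\bx^k)$ from these, and then conclude weak convergence of $(\bz^k)$ via nonexpansiveness of $T_A$, demiclosedness of $\Id-T_A$, and the standard Fej\'er/Opial argument. The only cosmetic differences are that you extract the vanishing of the extra terms via telescoping summability rather than by passing to the limit of the convergent nonincreasing sequence $\|\bz^k-\bar{\bz}\|^2$ (both work), and that you additionally note uniqueness of the zero of $\sum_i A_i$ (true, but not needed by the argument).
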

\begin{proof}
Since $\zer\left(\sum_{i=1}^nA_i\right)\neq\varnothing$, Lemma~\ref{l:fixed points}\eqref{l:fixed points a} implies there exists $(\bar{\bz},x)\in\Omega$ which implies $\bar{\bz}\in\Fix T_A$ and $x\in\zer\left(\sum_{i=1}^nA_j\right)$.  By repeating the proof of Lemma~\ref{l:T_A ne} but using uniform monotonicity in place of monotonicity, we obtain
\begin{equation*}
 \|\bz^{k+1}-\bar{\bz}\|^2 + \frac{1-\gamma}{\gamma}\|\bz^k-\bz^{k+1}\|^2 
 + \frac{1}{\gamma}\bigl\|x_1^k-x_n^k\|^2 + 2\gamma\sum_{i=2}^n\phi_{i}\big(\|x_i^k-x\|) \leq \|\bz^k-\bar{\bz}\|^2\quad\forall k\in\mathbb{N}.
\end{equation*}
Since $\gamma\in(0,1]$ this implies
\begin{equation}\label{eq:PR key}
 \|\bz^{k+1}-\bar{\bz}\|^2  + \frac{1}{\gamma}\bigl\|x_1^k-x_n^k\|^2 + 2\gamma\sum_{i=2}^n\phi_{i}\big(\|x_i^k-x\|) \leq \|\bz^k-\bar{\bz}\|^2\quad\forall k\in\mathbb{N}.
\end{equation}
It follows that $(\|\bz^k-\bar{\bz}\|^2)$ is non-increasing and hence convergent. Taking the limit as $k\to+\infty$ in \eqref{eq:PR key} then gives that $x_1^k-x_n^k\to 0$ and $x_i^k\to x$ for all $i\in\{2,\dots,n\}$. Hence the sequence $(\bx^k)$ converges strongly to $(x,\dots,x)\in\Hilbert^n$, which establishes \eqref{it:uni mono b}.

To establish \eqref{it:uni mono a}, first note that $T_A$ is nonexpansive due to Lemma~\ref{l:T_A ne}. Let $\bz'$ be an arbitrary weak cluster point of the bounded sequence $(\bz^k)$.  Since $\bx^k\to(x,\dots,x)$, \eqref{eq:th T_A} implies that $(\Id-T_A)(\bz^k)\to 0$ which, since $\Id-T_A$ is demiclosed~\cite[Theorem~4.27]{bauschkecombettes},  implies $\bz'\in\Fix T_A$. Thus, applying \cite[Theorem~5.5]{bauschkecombettes}, gives that $(\bz^k)$ converges weakly to a point in $\Fix T_A$ as claimed.
\end{proof}

\begin{remark}
The limiting case (\emph{i.e.,} $\gamma=2$) of Douglas--Rachford
splitting~\eqref{eq:dr gamma}  for a finding a zero in the sum of $n=2$
operators is known as \emph{Peaceman--Rachford
  splitting}~\cite{peaceman1955numerical}. Theorem~\ref{th:main uniform mono}
can be considered as an extension of Peaceman--Rachford splitting for $n\geq 2$
operators, in the sense that it represents the limiting case of our proposed method. For $n\geq 2$, a zero in the sum of $n$ operators can be found using Peaceman--Rachford splitting applied to the product formulation (Example~\ref{ex:psf}) provided that all $n$ operators are uniformly monotone. In contrast, Theorem~\ref{th:main uniform mono} only requires $n-1$ of the operators to be uniformly monotone. 
\end{remark}

\section{Distributed Decentralised Optimisation}\label{s:distributed}
The structure of the update step in Algorithm~\ref{alg} is especially well suited to being performed in a distributed decentralised way, without the need for a ``central coordinator'' to enforce consensus. Specifically, we consider a cycle graph with $n$ nodes labelled $1$ through $n$. Each node in the graph represents a device, and two devices can communicate with one another only if their nodes are adjacent. In our setting, this means that node $i$ can communicate with nodes $i-1$ and $i+1 \pmod n$. For each $i\in\{1,\dots,n\}$, we assume that node $i$ only knows the operator $A_{i}$. In addition, we assign updating of $z_{1},\dots, z_{n-1}$ to nodes $2,\dots, n$, respectively. With this in mind, we give the following protocol (Algorithm~\ref{alg:dist}) for distributed decentralised implementation of Algorithm~\ref{alg}.
\begin{algorithm}[!ht]
\caption{Protocol for distributed decentralised implementation of Algorithm~\ref{alg}. \label{alg:dist}}
\SetKwInOut{Input}{Input}
\Input{Let $\gamma\in(0,1)$. For each $i\in\{2,\dots,n\}$, node $i$ chooses $z^0_i\in\Hilbert$.}
\smallskip
\For{$k=1,2,\dots$}{
1. For each $i\in\{2,\dots, n\}$, node $i$ sends $z_{i-1}^k$ to Node $i-1$\;

\vspace{0.5ex}

2. Node $1$ computes $x_{1}^k = J_{A_{1}}(z_{1}^k)$ and sends $x_1^k$ to nodes $2$ and $n$\;

\vspace{0.5ex}

3. For each $i\in\{2,\dots, n-1\}$, node $i$ computes 
$$x_{i}^k = J_{A_{i}}(z_{i}^k -z_{i-1}^k +x_{i-1}^k),$$ sends $x_i^k$ to node $i+1$, and updates $z_{i-1}^{k+1}=z_{i-1}^k+\gamma (x_{i}^k-x_{i-1}^k)$\; 

\vspace{0.5ex}

4. Node $n$ computes $$x_{n}^k = J_{A_{n}}(x_{1}^k -z_{n-1}^k +x_{n-1}^k)$$ and updates $z_{n-1}^{k+1}=z_{n-1}^k+\gamma (x_{n}^k-x_{n-1}^k)$\;
}
\end{algorithm}

In total, the protocol described in Algorithm~\ref{alg:dist} requires each node to send exactly two messages per iteration; node $i$ sends one message to node $i-1$ and one to node $i+1$ $\pmod n$. As node $i$ is responsible for computing the sequence $(x_i^k)$ (which converges to a solution) in this protocol, each node always keeps track of its own approximate solution. Also note that not all steps in Algorithm~\ref{alg:dist} must be completed in the stated order. For example, after completing Step~3, nodes $2,\dots,n-1$ can commence Step~1 of the next iteration without waiting for node $n$ to complete Step~4.

\subsection{Numerical Example: $\ell_1$-Consensus}
To illustrate the method, consider the simple unconstrained optimisation problem given by
\begin{equation}
  \label{eq:decentr}
  \min_{x\in \R} |x-c_{1}| +\dots + |x-c_{n}|,
\end{equation}
where $c_{1},\dots, c_{n}\in \R$ are given. Through its first order optimality
condition, this problem is equivalent to the monotone inclusion
\begin{equation*}
 \text{find~}x\in\R\text{~such that~}0\in\sum_{i=1}^nA_i(x),\quad \text{~where~}A_i:=\partial|\cdot - c_{i}|.
\end{equation*}
We solve this in the decentralised way described above. Since the resolvent of $A_{i}$ has
an explicit form, Algorithm~\ref{alg} is straightforward to implement (using the
protocol described in Algorithm~\ref{alg:dist}).

Alternatively, we can rewrite~\eqref{eq:decentr} as the constrained minimisation problem given by
\begin{equation}
  \label{eq:decentr2}
  \min_{\bx = (x_{1},\dots, x_{n})\in\mathbb{R}^n} |x_{1}-c_{1}| +\dots + |x_{n}-c_{n}|\quad \text{subject to}\quad L \bx = 0,
\end{equation}
where $L\in \R^{n\times n}$ is the \emph{Laplacian} of the cycle graph, that is $L_{ii} = 2$ for all $i\in \integ{1}{n}$ and
$L_{ij}=-1$ if $|i-j| = 1 \pmod n$ and $0$ otherwise. In words, $L\bx$ amounts
to the exchange of the coordinates of $\bx$  between nodes in the way permitted
by the network topology. Formulation~\eqref{eq:decentr2} can be solved using the
\emph{primal dual hybrid gradient (PDHG)} algorithm
\cite{chambolle2011first,condat2013primal} given by 
\begin{equation}\label{eq:PDHG}
\left\{\begin{aligned}
\bx^{k+1} &= \prox_{\tau f} (\bx^k - \tau L^\ast \by^k)\\
\by^{k+1} &= \by^k + \sigma L(2\bx^{k+1}-\bx^k),
\end{aligned}\right.
\end{equation}
where $f(\bx) := \n{\bx-\mathbf{c}}_{1}$ and
$\tau \sigma \n{L}^{2}\leq 1$. 

However, \eqref{eq:PDHG} turns out not to be the best way to use the PDHG to solve \eqref{eq:decentr2}. Instead, one can first replace the constraint in \eqref{eq:decentr2} with the constraint $\sqrt L \bx = 0$, where $\sqrt L$ is the unique positive
semidefinite matrix such that $(\sqrt L)^2 = L$. Then, after making the change of variables $\bv^k = \sqrt L \by^k$, the PDHG scheme~\eqref{eq:PDHG} with $\sqrt L$ in place of $L$ can be expressed as
\begin{equation}\label{eq:PDHG2}
\left\{\begin{aligned}
\bx^{k+1} &= \prox_{\tau f} (\bx^k - \tau \bv^k)\\
\bv^{k+1} &= \bv^k + \sigma L(2\bx^{k+1}-\bx^k),
\end{aligned}\right.
\end{equation}
where $\tau$ and $\sigma$ are only required to satisfy the weaker inequality $\tau \sigma \|L\| \leq 1$. 

Both Algorithm~\ref{alg} and the PDHG algorithm~\eqref{eq:PDHG2} use
approximately the same number of variables and have approximately the same
amount of computation/communication per iteration.  To compare their
performance, we measure the residuals of the two algorithms: $\frac{1}{\gamma}\n{\bz^{k+1}-\bz^{k}}$ for Algorithm~\ref{alg} and
$\left(\frac{1}{\tau} \n{\bx^{k+1}-\bx^{k}}^{2}+\frac{1}{\sigma}\n{\by^{k+1}-\by^{k}}^{2}-2\lr{\sqrt L(\bx^{k+1}-\bx^k),\by^{k+1}-\by^k}\right)^{1/2}$
for PDHG. The latter residual comes from the interpretation of PDHG as the
proximal point algorithm and after some algebra it can be written simply as
$\left(\frac{1}{\tau} \n{\bx^{k+1}-\bx^{k}}^{2}+ \lr{\bv^{k+1}-\bv^k,\bx^k}\right)^{1/2}$.
Comparison of these two residuals is justified because they corresponds to successive iterations of the underlying fixed point operators.
\begin{figure*}[!htb]
\centering
{\includegraphics[width=1\textwidth]{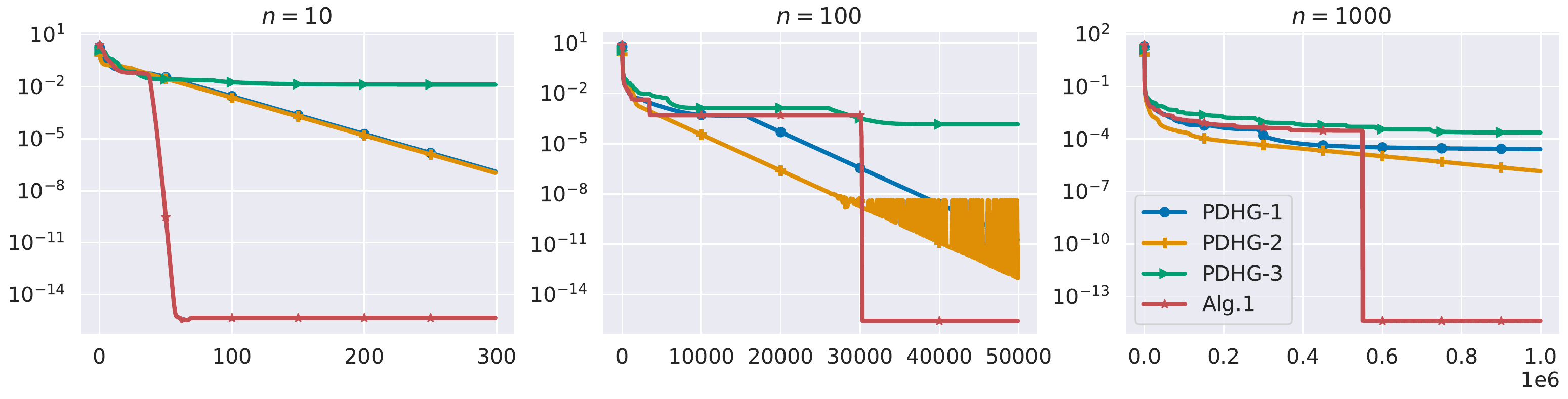}}
\caption{Results for problem~\eqref{eq:decentr}. $x$-axis: iterations, $y$-axis:
  residuals}
\label{fig:l1-norm}
\end{figure*}

For $n\in\{10, 100, 1000\}$, we generated the numbers $c_{i}$ randomly by
sampling the standard normal distribution. The initial point
for Algorithm~\ref{alg} was taken as
$\bz^{0}=\mathbf{0}\in\mathbb{R}^{n-1}$
and for PDHG as $(\bx^{0},\by^{0})=(\mathbf{0},\mathbf{0})\in\mathbb{R}^n\times\mathbb{R}^n$. 
Algorithm~\ref{alg} was run with $\gamma = 0.9$ and PDHG with stepsizes given by
$$(\tau, \sigma) \in  \left\{\Bigl(\frac{1}{\sqrt{\n{L}}}, \frac{1}{\sqrt{\n{L}}}\Bigr), \Bigl(\frac{1}{10\sqrt{\n{L}}}, \frac{10}{\sqrt{\n{L}}}\Bigr), \Bigl(\frac{10}{\sqrt{\n{L}}}, \frac{1}{10\sqrt{\n{L}}}\Bigr)\right\}$$
(we refer to these choices as PDHG versions 1, 2, and 3, respectively). The
results are shown in Figure~\ref{fig:l1-norm} and suggest favourable performance
of Algorithm~\ref{alg}, but further investigation is needed.
\section{Multi-block ADMM}\label{s:admm}
The \emph{alternating direction method of multipliers (ADMM)} is an algorithm for solving minimisation problems of the form
\begin{equation}\label{eq:admm problem}
 \min_{\bw=(w_1,w_2)} f_1(w_1)+f(w_2) \text{~~subject to~~}A_1w_1+A_2w_2 = b,
\end{equation}
where $f_1\colon\Hilbert_1\to(-\infty,+\infty]$ and
$f_2\colon\Hilbert_2\to(-\infty,+\infty]$ are proper lsc convex functions,
$A_1\colon\Hilbert_1\to\Hilbert$ and $A_2\colon\Hilbert_2\to\Hilbert$ are
bounded linear opeartors, and $b\in\Hilbert$ is a given vector. ADMM is known to
be equvialent to applying the Douglas--Rachford method to the Fenchel dual of
\eqref{eq:admm problem}~\cite{gabay1983applications,eckstein1992douglas}.  As the family of resolvent splitting introduced in Section~\ref{s:family} includes the Douglas--Rachford method as the special case when $n=2$ (see Remark~\ref{r:special cases}), it is natural to investigate an analogous equivalence in the setting when $n\geq 3$. 

Before proceeding, we recall some notation. The \emph{Fenchel conjugate} of an extended real-valued function $g$ is denoted $g^*(x):=\sup_{y}\{\langle x,y\rangle-g(y)\}$. Given a set $C$, its \emph{indicator function}, denoted $\iota_C$, is the function which takes the value $0$ on $C$ and $+\infty$ outside $C$.

Consider the separable minimisation problem
\begin{equation}\label{eq:sep convex}
\min_{\bw=(w_1,\dots,w_{n})} \sum_{i=1}^{n}f_i(w_i) \text{~~subject to~~}\sum_{i=1}^{n}A_iw_i = b,
\end{equation}
where $f_1,\dots,f_n$ are proper lsc convex functions, $A_1,\dots,A_n$ are bounded linear operators, and $b\in\Hilbert$ is a given vector. Denote $f(\bw):=\sum_{i=1}^nf_i(w_i)$ and $\bA:=\begin{bmatrix}
A_1 & \dots & A_n \\ \end{bmatrix}$.  Then \eqref{eq:sep convex} can be expressed as the Fenchel primal problem given by
\begin{equation}\label{eq:primal}
 p:=\min_{\bw}\bigl\{ f(\bw) + \iota_{\{b\}}(\bA\bw) \bigr\}.
\end{equation}
which has Fenchel dual given by
$$ d:=\sup_x\bigl\{ -f^*(\bA^*x) - \iota_{\{b\}}^*(-x) \bigr\}. $$
Since $f^*=\bigoplus_{i=1}^nf_i^*$, $\iota_{\{b\}}^*=\langle b,\cdot\rangle$ and $\bA^*=\begin{bmatrix} A_1^* \\ \vdots \\ A_n^* \\ \end{bmatrix}$, this dual problem is equivalent to
\begin{equation}\label{eq:dual}
 \inf_x\bigl\{\sum_{i=1}^{n-1}f_i^*(-A_i^*x)+g^*(x)\bigr\}\text{~~where~~}g^*(x):=f_n^*(-A_n^*x)-\langle b,x\rangle .
\end{equation}

In what follows, we derive a multi-block extension of the ADMM algorithm by applying the new resolvent splitting from Section~\ref{s:family} to the dual problem \eqref{eq:dual}. Such an extension is of interest because, even when $n=3$, the natural ``direct extension of ADMM'' need not converge \cite{chen2016admm}. 

We begin by examining the so-called ``averaged operator'' form of our proposed ADMM extension. To simplify the presentation of our derivation, we will use the following lemmas.

\begin{lemma}\label{l:prox f+g}
Let $h_1\colon\Hilbert'\to(-\infty,+\infty]$ and $h_2\colon\Hilbert\to(-\infty,+\infty]$ be proper lsc convex functions, and let $A\colon\Hilbert'\to\Hilbert$ be a bounded linear operator. Denote $h(z):=h_1^*(-A^*z)+h_2^*(z)$. 
If
\begin{equation}\label{eq:prox f+g}
\begin{aligned}
 (\hat{x},\hat{y})
  &\in \argmin_{x,y}\left\{ h_1(x)+h_2(y)+\langle z, Ax-y\rangle + \frac{1}{2}\|Ax-y\|^2\right\} \\
  &= \argmin_{x,y}\left\{h_1(x)+h_2(y) + \frac{1}{2}\|Ax-y+z\|^2\right\},
\end{aligned}
\end{equation}
then $\prox_{h}(z) = z+(A\hat{x}-\hat{y})$. Moreover, the minimum in \eqref{eq:prox f+g} is attained provided that: (a) $h_1$ is coercive or $A^*A$ is invertible, and (b) $h_2^*=\langle b,\cdot\rangle $ for some vector $b\in\Hilbert'$.
\end{lemma}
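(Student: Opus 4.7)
The plan is to verify equality of the two expressions for the argmin in \eqref{eq:prox f+g}, then use the first-order optimality conditions at $(\hat{x},\hat{y})$ to identify the point $\hat{u} := z + A\hat{x} - \hat{y}$ with $\prox_h(z)$, and finally to establish attainment under the stated hypotheses. Equality of the two argmin formulations is immediate by completing the square:
\[
 \langle z, Ax-y\rangle + \tfrac{1}{2}\|Ax-y\|^2 = \tfrac{1}{2}\|Ax-y+z\|^2 - \tfrac{1}{2}\|z\|^2,
\]
where the subtracted constant does not depend on $(x,y)$.

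For the main claim, I would apply Fermat's rule componentwise to the second form in \eqref{eq:prox f+g}: at an optimal $(\hat{x},\hat{y})$ this gives $-A^*\hat{u} \in \partial h_1(\hat{x})$ and $\hat{u} \in \partial h_2(\hat{y})$. Using the standard Fenchel identity $(\partial h_i)^{-1} = \partial h_i^*$, these rewrite as $\hat{x} \in \partial h_1^*(-A^*\hat{u})$ and $\hat{y} \in \partial h_2^*(\hat{u})$. The (unconditional) forward direction of the chain rule for linear pre-composition yields $-A\hat{x} \in \partial\bigl(h_1^*\circ(-A^*)\bigr)(\hat{u})$, and combined with $\hat{y} \in \partial h_2^*(\hat{u})$ via the (unconditional) forward direction of the subdifferential sum rule, we obtain
\[
 z - \hat{u} \;=\; \hat{y} - A\hat{x} \;\in\; \partial\bigl(h_1^*\circ(-A^*)\bigr)(\hat{u}) + \partial h_2^*(\hat{u}) \;\subseteq\; \partial h(\hat{u}).
\]
Since the existence of $(\hat{x},\hat{y})$ forces $h(\hat{u}) < +\infty$, the function $h$ is proper lsc convex, and the above inclusion is precisely the defining subdifferential characterisation of the proximal point, giving $\hat{u} = \prox_h(z)$ as claimed.

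For the ``moreover'' part, condition~(b) together with the Fenchel--Moreau theorem forces $h_2 = h_2^{**} = \bigl(\langle b,\cdot\rangle\bigr)^* = \iota_{\{b\}}$. Consequently the inner minimisation over $y$ is trivial ($\hat{y} = b$) and the problem reduces to $\min_{x}\bigl\{h_1(x) + \tfrac{1}{2}\|Ax - b + z\|^2\bigr\}$, whose objective is proper lsc convex. Under the first case of~(a), coercivity of $h_1$ directly makes the sum coercive; under the second case, invertibility of $A^*A$ yields $\|Ax\|^2 \geq \alpha\|x\|^2$ for some $\alpha>0$, so the quadratic term is coercive in $\|x\|$ and dominates the affine minorant of $h_1$ guaranteed by proper lsc convexity. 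In either situation the classical existence result for coercive proper lsc convex functions on a Hilbert space supplies a minimiser.

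I do not expect any serious obstacle here; the main delicacy is bookkeeping, namely to invoke only the forward directions of the chain and sum rules for convex subdifferentials so that no constraint qualifications are required. The one-sided identification of $\prox_h(z)$ described above is tailored precisely to that restriction.
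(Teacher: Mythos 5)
Your proof is correct and takes essentially the same route as the paper: first-order optimality at $(\hat{x},\hat{y})$, inversion of the subdifferentials via $(\partial h_i)^{-1}=\partial h_i^*$, and then the forward (constraint-qualification-free) direction of the chain and sum rules to land in $\partial h(\hat{u})$; the coercivity argument for attainment is also the same in substance, just stated directly rather than via the citations to Bauschke--Combettes. Your remark that only the one-sided inclusions are needed is exactly the point the paper uses implicitly, so this is a faithful reconstruction, not a different method.
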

\begin{proof}
Suppose the minimum in \eqref{eq:prox f+g} is attained at $(\hat{x},\hat{y})$ and denote $w:=z+(A\hat{x}-\hat{y})$. By first-order optimality in \eqref{eq:prox f+g}, we have $-A^*w\in\partial h_1(\hat{x})$ and $w\in \partial h_2(\hat{y})$. Since $(\partial h_1)^{-1}=\partial h_1^*$ and $(\partial h_2)^{-1}=\partial h_2^*$, it follows that $0 \in (-A)\partial h_1^*(-A^*w)+\partial h_2^*(w) +(w-z)$ which, in turn, implies 
$$ w\in\argmin_u\{h_1^*(-A^*u)+h_1^*(u)+\frac{1}{2}\|u-z\|^2\}=\argmin_u\{h(u)+\frac{1}{2}\|u-z\|^2\}. $$
Thus, $\prox_h(z)=w=z+(A\hat{x}-\hat{y})$, which establishes the first claim.

Further, suppose $h_2^*=\langle b,\cdot\rangle$ for some $b\in\Hilbert$. Then $h_2=h_2^{**}=\iota_{\{b\}}$, and hence  $(\hat{x},\hat{y})$ attains the minimum in \eqref{eq:prox f+g} if and only if $\hat{y}=b$ and
\begin{equation}\label{eq:prox f+g 2}
 \hat{x} \in \argmin_{x}\left\{\phi(x):=h_1(x) + \frac{1}{2}\|Ax-b+z\|^2\right\}. 
\end{equation}
If $h_1$ is coercive, then $\phi$ is also coercive due to nonnegativity $\|\cdot\|^2$. If $A^*A$ is invertible, then $x\mapsto \frac{1}{2}\|Ax-b+z\|^2 = \frac{1}{2}\langle x,A^*Ax\rangle + \langle x,A^*(z-b)\rangle + \frac{1}{2}\|z-b\|^2$
is supercoercive by \cite[Exercise~17.1]{bauschkecombettes} and hence $\phi$ is cocercive by  \cite[Proposition~11.14]{bauschkecombettes}. Thus, in either case, $\phi$ is coercive, and so the minimum in \eqref{eq:prox f+g 2} is attained due to \cite[Proposition~11.15]{bauschkecombettes}. This establishes the second claim and completes the proof.
\end{proof}

\begin{lemma}\label{l:boudned}
Let $h\colon\Hilbert\to(-\infty,+\infty]$ be a proper lsc convex function,  let $A\colon\Hilbert'\to\Hilbert$ be a bounded linear operator, and let $(u^k)\subseteq\Hilbert$ be a bounded sequence. Suppose that $h$ is coercive or $A^*A$ is invertible. Then, any sequence $(w^k)\subseteq\Hilbert'$ satisfying
\begin{equation}\label{eq:l wk}
 w^k \in \argmin_{w}\left\{h(w)+\frac{1}{2}\|Aw+u^k\|^2\right\}\quad\forall k\in\mathbb{N}, 
\end{equation}
is bounded.
\end{lemma}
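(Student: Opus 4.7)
The plan is to fix any $w_{0}\in\dom h$ (which exists since $h$ is proper) and exploit the minimality $w^{k}\in\argmin_{w}\{h(w)+\tfrac{1}{2}\|Aw+u^{k}\|^{2}\}$ to get the uniform upper bound
\[
 h(w^{k})+\tfrac{1}{2}\|Aw^{k}+u^{k}\|^{2}\leq h(w_{0})+\tfrac{1}{2}\|Aw_{0}+u^{k}\|^{2}.
\]
Since $(u^{k})$ is bounded, the right-hand side is bounded by a constant $C$ independent of $k$, and the plan is then to split into the two cases and argue that the left-hand side forces $(w^{k})$ to be bounded.

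In the case where $h$ is coercive, since $\tfrac{1}{2}\|Aw^{k}+u^{k}\|^{2}\geq 0$ the inequality above reduces to $h(w^{k})\leq C$. If $(w^{k})$ were unbounded, coercivity of $h$ would give $h(w^{k})\to+\infty$ along some subsequence, a contradiction. So this case is immediate.

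In the case where $A^{*}A$ is invertible, I would use the existence of $\mu>0$ such that $\|Aw\|^{2}\geq\mu\|w\|^{2}$ for all $w\in\Hilbert'$, together with a continuous affine minorant $h(w)\geq\langle v^{*},w\rangle+c$ for $h$ (available for any proper lsc convex function on a Hilbert space). Substituting these into the upper bound and expanding $\|Aw^{k}+u^{k}\|^{2}=\|Aw^{k}\|^{2}+2\langle A^{*}u^{k},w^{k}\rangle+\|u^{k}\|^{2}$ yields an estimate of the form
\[
 \tfrac{\mu}{2}\|w^{k}\|^{2}-\bigl(\|v^{*}\|+\|A^{*}\|\sup_{k}\|u^{k}\|\bigr)\|w^{k}\|+c\leq C,
\]
which is a quadratic-in-$\|w^{k}\|$ inequality and hence forces $(\|w^{k}\|)$ to be bounded.

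The argument is essentially routine; the only subtle points are remembering to use an affine minorant of $h$ (rather than a pointwise lower bound) in the second case, and to bound $\tfrac{1}{2}\|Aw_{0}+u^{k}\|^{2}$ uniformly using boundedness of $(u^{k})$. Both cases produce the desired boundedness with only elementary convex analysis, so no serious obstacle is anticipated.
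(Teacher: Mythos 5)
Your proposal is correct and takes essentially the same approach as the paper: both start from the minimality inequality $h(w^k)+\tfrac12\|Aw^k+u^k\|^2\leq h(w_0)+\tfrac12\|Aw_0+u^k\|^2$ (with the right-hand side bounded via boundedness of $(u^k)$) and then derive boundedness of $(w^k)$ from the coercivity supplied by either hypothesis. The only presentational difference is that the paper argues by contradiction and abstractly cites the coercivity of $\phi(w)=h(w)+\tfrac14\|Aw\|^2$ established in the proof of Lemma~\ref{l:prox f+g}, whereas you unfold the two cases explicitly, using an affine minorant and the estimate $\|Aw\|^2\geq\mu\|w\|^2$ in the invertibility case; your version is self-contained but the substance is identical.
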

\begin{proof}
Suppose, by way of a contradiction, that $(w^k)$ is not bounded. Then, without loss of generality, we assume that $\|w^k\|\to+\infty$ as $k\to+\infty$. Using the same argument as in the second part of the proof of Lemma~\ref{l:prox f+g}, we deduce that function $\phi(w):= h(w) + \frac{1}{4}\|Aw\|^2$ is coercive and hence $\phi(w^k)\to+\infty$ as $k\to+\infty$.
Since $h$ is proper, \eqref{eq:l wk} implies 
\begin{align*}
+\infty >  h(w^1)+\frac{1}{2}\|Aw^1+u^k\|^2 
  \geq h(w^k) + \frac{1}{2}\|Aw^k+u^k\|^2 \geq h(w^k) + \frac{1}{4}\|Aw^k\|^2 - \frac{1}{2}\|u^k\|^2 \quad\forall k\in\mathbb{N}.
\end{align*}
Since $(u^k)$ is bounded, this inequality implies that the sequence $\bigl(h(w^k) + \frac{1}{4}\|Aw^k\|^2\bigr)$ is bounded above. This is a contradiction, and so the proof is complete.
\end{proof}

\begin{algorithm}[!htb]
\caption{Multi-block ADMM for \eqref{eq:sep convex} in averaged operator form. \label{alg:admm}}
\SetKwInOut{Input}{Input}
\Input{Choose $\bz^0=(z^0_1,\dots,z^0_{n-1})$ and $\gamma\in(0,1)$.}
\smallskip
\For{$k=1,2,\dots$}{
1.~Compute $\bw^k=(w^k_1,\dots,w^k_n)$ according to
\begin{subequations}\label{eq:multi-ADMM averaged}
\begin{align}[left = \empheqlbrace\,]
w^k_1 &\in \argmin_{w_1}\bigl\{f_1(w_1) + \frac{1}{2}\|A_1w_1+z_1^k\|^2\bigr\}  \label{eq:multi-ADMM averaged a}\\
w^k_i &\in \argmin_{w_i}\bigl\{ f_i(w_i) + \frac{1}{2}\|\sum_{j=1}^{i-1}A_jw_j^k+A_iw_i+z_i^k\|^2\bigr\} \hspace{1.5cm} \forall i\in \integ{2}{n-1} \label{eq:multi-ADMM averaged b} \\
w^k_n &\in \argmin_{w_n}\bigl\{f_n(w_n) + \frac{1}{2}\|2A_1w_1^k+\sum_{j=2}^{n-1}A_jw_j^k+A_nw_n-b+z_1^k\|^2\bigr\} \label{eq:multi-ADMM averaged c} 
\end{align}
2.~Update $\bz^{k+1}=(z^{k+1}_1,\dots,z^{k+1}_{n-1})$ according to
\begin{align}[left = \empheqlbrace\,]
z^{k+1}_i &= z^k_i + \gamma (z^k_{i+1}-z^k_i) + \gamma A_{i+1}w_{i+1}^k \hspace{3.95cm} \forall i\in\integ{1}{n-2}\label{eq:multi-ADMM averaged d} \\
z^{k+1}_{n-1} &= z^k_{n-1} + \gamma (z_1^k-z^k_{n-1}) + \gamma(A_1w_1^k+A_nw_n^k-b). \label{eq:multi-ADMM averaged e}
\end{align}
\end{subequations}
}
\end{algorithm}

Recall that $(\bw,x)$ is a \emph{Kuhn--Tucker pair} for \eqref{eq:primal} if $-\bA^* x\in\partial f(\bw)$ and $x\in\partial \iota_{\{b\}}(\bA\bw)$. In particular, when such a pair exists, $\bw$ is a primal solution and $x$ is a dual solution. Moreover, by using properties of the Fenchel conjugate, it can be seen that $(\bw,x)$ being a Kuhn--Tucker pair is equivalent to $\bw\in\partial f^*(-\bA^*x)$ and $\bA\bw\in\partial\iota_{\{b\}}^*(x)$ which implies
\begin{equation}\label{eq:zer Fis}
\begin{aligned} 0 \in (-\bA)\partial f^*(-\bA^*x) + \partial\iota_{\{b\}}^*(x) 
&= \sum_{i=1}^n(-A_i)\partial f_i^*(-A_i^*x)+ \partial\langle b,\cdot\rangle(x) \\
&\subseteq \sum_{i=1}^{n-1}\partial\bigl(f_i^*\circ(-A_i^*)\bigr)(x) + \partial g^*(x), 
\end{aligned}
\end{equation}

We are ready to derive our multi multi-block extension of ADMM and analyse its convergence. 
The averaged operator form of our proposed algorithm is given in  Algorithm~\ref{alg:admm} and its convergence analysed in Theorem~\ref{th:admm ave}.

\begin{theorem}[Multi-block ADMM -- averaged operator form]\label{th:admm ave}
Let $n\geq 2$. Suppose \eqref{eq:primal} has a Kuhn--Tucker pair, and that, for each $i\in\{1,\dots,n\}$, either $f_i$ is coercive or $A_i^*A_i$ is invertible. Let $\gamma\in(0,1)$. Given $\bz^0=(z^0_1,\dots,z^0_{n-1})$, consider the sequences $(\bz^k)$ and $(\bw^k)$ given by Algorithm~\ref{alg:admm}.
Then:
\begin{enumerate}[(a)]
\item The sequence $(\bw^k)$ is bounded and every weak limit point is a solution of the primal problem~\eqref{eq:primal}. Moreover, if $A_i^*A_i$ is invertible for all $i\in\{1,\dots,n\}$, then $(\bw^k)$ converges strongly.
\item For each $i\in\{1,\dots,n-1\}$, the sequence $(z_i^k+\sum_{j=1}^iA_jw_j^k)$ converges weakly to a solution of the dual problem~\eqref{eq:dual}.
\item The residual sequence converges strongly to zero. That is, $\sum_{i=1}^nA_iw_i^k\to b$ as $k\to+\infty$.
\end{enumerate}
\end{theorem}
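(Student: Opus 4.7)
The approach is to identify Algorithm~\ref{alg:admm} with Algorithm~\ref{alg} applied to a dual monotone inclusion, and then transfer Theorem~\ref{th:main} and Remark~\ref{re:x1-xn} back to primal variables. Set $B_i := \partial(f_i^*\circ(-A_i^*))$ for $i\in\integ{1}{n-1}$ and $B_n := \partial g^*$. Each is maximally monotone, and \eqref{eq:zer Fis} together with the existence of a Kuhn--Tucker pair forces $\zer\bigl(\sum_{i=1}^n B_i\bigr)\neq\varnothing$. Applying Lemma~\ref{l:prox f+g} with $h_1 = f_i$, $A = A_i$, and an indicator $h_2$ chosen to absorb the $\langle b,\cdot\rangle$-term implicit in $g^*$, one obtains
$$ J_{B_i}(u) = u + A_i\hat{w}_i(u) \text{ for } i<n,\qquad J_{B_n}(u) = u + A_n\hat{w}_n(u) - b, $$
where each $\hat{w}_i(u)$ is the argmin appearing in \eqref{eq:multi-ADMM averaged a}--\eqref{eq:multi-ADMM averaged c}; existence is ensured by the coercivity-or-invertibility hypothesis and the second part of Lemma~\ref{l:prox f+g}.

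Define $x_i^k := z_i^k + \sum_{j=1}^i A_jw_j^k$ for $i\in\integ{1}{n-1}$ and $x_n^k := x_1^k + x_{n-1}^k - z_{n-1}^k + A_nw_n^k - b$. Telescoping yields $x_{i+1}^k - x_i^k = (z_{i+1}^k - z_i^k) + A_{i+1}w_{i+1}^k$ for $i\leq n-2$ and $x_n^k - x_{n-1}^k = (z_1^k - z_{n-1}^k) + A_1w_1^k + A_nw_n^k - b$. These identities, combined with the resolvent formulas above, show that $(\bz^k,\bx^k)$ coincides with the sequence generated by Algorithm~\ref{alg} applied to $(B_1,\dots,B_n)$: \eqref{eq:multi-ADMM averaged d}--\eqref{eq:multi-ADMM averaged e} match \eqref{eq:th T_A} and \eqref{eq:multi-ADMM averaged a}--\eqref{eq:multi-ADMM averaged c} match \eqref{eq:th J_A}. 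Theorem~\ref{th:main} therefore gives $\bz^k\wto\bz^\infty\in\Fix T_B$ and $\bx^k\wto(x^*,\dots,x^*)$ with $x^*$ dual-optimal, which is exactly (b) since $x_i^k = z_i^k + \sum_{j=1}^i A_jw_j^k$. For (c), direct substitution yields $x_n^k - x_1^k = \sum_{i=1}^n A_iw_i^k - b$, so Remark~\ref{re:x1-xn} immediately delivers $\sum_i A_iw_i^k \to b$ strongly.

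For part (a), boundedness of $(\bw^k)$ follows by applying Lemma~\ref{l:boudned} to each of \eqref{eq:multi-ADMM averaged a}--\eqref{eq:multi-ADMM averaged c} in turn, since each input is a bounded linear combination of $\bz^k$ and previously computed $A_jw_j^k$. Let $\bar\bw$ be a weak cluster point along a subsequence. Passing to the weak limit in $\sum_i A_iw_i^k \to b$ gives $\bA\bar\bw = b$. First-order optimality of the $i$-th subproblem, simplified via the identity $A_iw_i^k + u_i^k = x_i^k$, reads $-A_i^*x_i^k\in\partial f_i(w_i^k)$, whence $f_i(w_i^k) + f_i^*(-A_i^*x_i^k) = -\langle A_iw_i^k, x_i^k\rangle$ by Fenchel--Young equality. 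Summing over $i$ and using the strong convergence $\sum_i A_iw_i^k \to b$ together with the strong convergence $x_i^k - x_j^k \to 0$ from Remark~\ref{re:x1-xn} (to replace each $x_i^k$ by $x_1^k$ up to a vanishing term, then pair $x_1^k \wto x^*$ with the strongly convergent sum), one obtains $\sum_i\langle A_iw_i^k, x_i^k\rangle \to \langle b, x^*\rangle$. Weak lower-semicontinuity of $f_i$ and $f_i^*$ then gives $\sum_i[f_i(\bar w_i) + f_i^*(-A_i^*x^*)] \leq -\langle b, x^*\rangle$, while Fenchel--Young together with $\bA\bar\bw = b$ yields the matching reverse inequality, forcing term-by-term equality, i.e., $-A_i^*x^*\in\partial f_i(\bar w_i)$. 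Hence $(\bar\bw,x^*)$ is a Kuhn--Tucker pair and $\bar\bw$ is primal-optimal. Strong convergence of $(\bw^k)$ when every $A_i^*A_i$ is invertible then follows from the resulting strong convexity (modulus $\mu_i>0$) of each subproblem combined with the strong convergences supplied by (c) and Remark~\ref{re:x1-xn}.

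The principal obstacles are twofold. First, the bookkeeping for the algorithmic correspondence, particularly pinning down the sign of $b$ implicit in $g^*$ and matching \eqref{eq:multi-ADMM averaged c} to the $J_{B_n}$-step of Algorithm~\ref{alg}. Second, and more substantial, the Fenchel--Young limit argument underpinning (a): the bilinear expression $\sum_i\langle A_iw_i^k, x_i^k\rangle$ involves sequences converging only weakly in general, and its limit identification crucially requires (c) and Remark~\ref{re:x1-xn} in order to convert weak-weak pairings into weak-strong ones.
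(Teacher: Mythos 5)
Your proposal follows the paper's global strategy: introduce the dual $n$-tuple of maximally monotone operators, identify Algorithm~\ref{alg:admm} with Algorithm~\ref{alg} applied to this tuple via Lemma~\ref{l:prox f+g}, and transport Theorem~\ref{th:main} and Remark~\ref{re:x1-xn} back through the correspondence $x_i^k = z_i^k + \sum_{j\leq i}A_jw_j^k$. Parts (b), (c), and the boundedness claim in (a) are argued essentially as in the paper.

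For the identification of weak cluster points of $(\bw^k)$ as primal solutions, you take a genuinely different route. The paper collects the relations $-A_i^*x_i^k\in\partial f_i(w_i^k)$ and $b-\bA\bw^k=x_1^k-x_n^k$ into a single inclusion for a maximally monotone product operator, then passes to the weak limit using demiclosedness of its graph. You instead invoke Fenchel--Young equality $f_i(w_i^k)+f_i^*(-A_i^*x_i^k)=-\langle A_iw_i^k,x_i^k\rangle$, sum over $i$, evaluate $\lim_k\sum_i\langle A_iw_i^k,x_i^k\rangle=\langle b,x^*\rangle$ through the weak--strong pairing enabled by (c) and Remark~\ref{re:x1-xn}, and then force equality via weak lower semicontinuity and the reverse Fenchel--Young inequality. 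Both are valid; your duality-gap argument avoids writing down the product monotone operator at the cost of an additional step to extract the $n$ separate subgradient inclusions from the aggregated inequality.

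The one substantive gap is the strong-convergence claim at the end of (a). Appealing to strong convexity (modulus $\mu_i>0$) of the $i$-th subproblem ``combined with the strong convergences supplied by (c) and Remark~\ref{re:x1-xn}'' does not produce $w_i^k\to\bar w_i$ strongly: the subproblem input $z_i^k+\sum_{j<i}A_jw_j^k$ converges only weakly, and neither (c) nor Remark~\ref{re:x1-xn} gives strong convergence of any individual $A_iw_i^k$ --- only of the total sum $\sum_i A_iw_i^k$. The paper concludes differently: it asserts invertibility of $\bA^*\bA$ and then writes $\bw^k=(\bA^*\bA)^{-1}\bA^*\bA\bw^k\to(\bA^*\bA)^{-1}\bA^*b$, which follows directly from (c). Your strong-convexity sketch at this step needs to be replaced by an argument of that type.
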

\begin{proof}
Let $F:=(F_1,\dots,F_n)\in\mathcal{A}_n$ denote the $n$-tuple of maximally monotone operators given by
\begin{equation}\label{eq:F_i}
 F_i:=\begin{cases}
  \partial\bigl(f_i^*\circ(-A_i^*)\bigr) & i=1,\dots,n-1, \\ 
  \partial g^* & i=n.
  \end{cases} 
\end{equation}
Since \eqref{eq:primal} has a Kuhn--Tucker pair, \eqref{eq:zer Fis} shows that $\zer\bigl(\sum_{i=1}^nF_i\bigr)\neq\varnothing$. Hence $F=(F_1,\dots,F_n)$ satisfies all the assumptions required to apply Theorem~\ref{th:main}. To this end, let $(\bz^k)$ denote the sequence given by 
\begin{equation}\label{eq:zi}
 \bz^{k+1}:=T_F(\bz^k)=\bz^k +\gamma \begin{pmatrix}
x_2^k-x_1^k \\
x_3^k-x_2^k \\
\vdots \\
x_{n}^k-x_{n-1}^k \\
\end{pmatrix},
\end{equation}
where the sequence $(\bx^k)$ is given by
\begin{equation}\label{eq:prox dr}
\left\{\begin{aligned}
x^k_1  &=J_{F_1}(z_1^k) = \prox_{f_1^*\circ(-A_1^*)}(z_1^k),  \\
x^k_i  &= J_{F_i}(z_i^k+x^k_{i-1}-z_{i-1}^k) =\prox_{f_i^*\circ(-A_i^*)}(z_i^k+x^k_{i-1}-z_{i-1}^k)\quad   &\forall i\in\integ{2}{n-1},\\
x^k_n  &= J_{F_n}(x_1^k+x^k_{n-1}-z^k_{n-1}) =\prox_{g^*}(x_1^k+x^k_{n-1}-z^k_{n-1}). 
\end{aligned}\right.
\end{equation}
Using Theorem~\ref{th:main} applied to $F\in\mathcal{A}_n$, we deduce that $(\bz^k)$ converges weakly to a point ${\bz\in\Fix T_F}$ and that $(\bx^k)$ converges weakly to a point $(x,\dots,x)$ with $x\in\zer\bigl(\sum_{i=1}^n F_i\bigr)$. Moreover, Remark~\ref{re:x1-xn} gives  that $\|x^k_i-x^k_j\|\to0$ as $k\to\infty$ for all $i,j\in\{1,\dots,n\}$.

Next, using Lemma~\ref{l:prox f+g}, we express \eqref{eq:prox dr} as
\begin{equation}\label{eq:dr}
\left\{\begin{aligned}
x^k_1 &= z_1^k+A_1w^k_1,  \\
x^k_i &= z_i^k+(x^k_{i-1}-z_{i-1}^k) + A_iw^k_i  &\forall i\in\integ{2}{n-1} \\
x^k_n &= x_1^k+(x^k_{n-1}-z^k_{n-1})+A_nw^k_n-b 
\end{aligned}\right.
\end{equation}
for some sequence $(\bw^k)$ satisfying
\begin{equation}\label{eq:wi}
\left\{\begin{aligned}
w^k_1 &\in \argmin_{w_1}\left\{f_1(w_1) + \frac{1}{2}\|A_1w_1+z_1^k\|^2\right\}  \\
w^k_i &\in \argmin_{w_i}\left\{f_i(w_i) + \frac{1}{2}\|A_iw_i+z_i^k+x^k_{i-1}-z_{i-1}^k\|^2\right\}\quad \forall i\in\integ{2}{n-1}  \\
w^k_n &\in \argmin_{w_n}\left\{f_n(w_n) + \frac{1}{2}\|A_nw_n-b+ x_1^k+x^k_{n-1}-z^k_{n-1}\|^2\right\} .
\end{aligned}\right.
\end{equation}
By substituting \eqref{eq:dr} into \eqref{eq:wi} and \eqref{eq:zi}, we obtain \eqref{eq:multi-ADMM averaged}. Thus, to complete the proof, it remains to establish assertions (a), (b) and (c), which we do in reverse order.
\begin{enumerate}[(a)]
\item[(c)] Summing the system of equations in \eqref{eq:dr} gives
\begin{equation*}
  A_1w_1^k+\dots + A_nw_n^k -  b = x_n^k-x_1^k \to 0\text{~~as~~}k\to+\infty.
\end{equation*}

\item[(b)] Follows by combining weak convergence of $(\bx^k)$ to $(x,\dots,x)$ with \eqref{eq:dr}.

\item[(a)] Since $(\bz^k)$ and $(\bx^k)$ are bounded, Lemma~\ref{l:boudned} implies that $(\bw^k)$ given by \eqref{eq:wi} is also bounded. Let $\bw=(w_1,\dots,w_n)$ be a weak cluster point $(\bw^k)$. Combining \eqref{eq:dr} and \eqref{eq:wi} gives
\begin{equation}\label{eq:admm1}
\partial f_i(w_i^k)\ni -A^*_ix_i^k \quad\forall i\in\{1,\dots,n\}\implies \partial f(\bw^k)  + \bA^*x^k_1 \ni \begin{pmatrix}
 0 \\ A_2^*(x_1^k-x_2^k) \\ \vdots \\ A_n^*(x_1^k-x_n^k) \\ 
 \end{pmatrix}.
\end{equation}
Since $\iota_{\{b\}}^*=\langle b,\cdot\rangle$, we have $(\partial \iota_{\{b\}})^{-1}=b$.  Thus, using the equality in \eqref{eq:admm1}, we deduce that
\begin{equation}\label{eq:admm2}
 (\partial \iota_{\{b\}})^{-1}(x_1^k) = b = \bA\bw^k + x_1^k-x_n^k \implies  (\partial \iota_{\{b\}})^{-1}(x_1^k)-\bA\bw^k \ni x_1^k-x_n^k. 
\end{equation}
By combining \eqref{eq:admm1} and \eqref{eq:admm2}, we obtain the maximally monotone inclusion
\begin{equation*}
 \left[ \binom{\partial f}{(\partial \iota_{\{b\}})^{-1}} + \begin{pmatrix}
0 & A^* \\ -A & 0 \\ \end{pmatrix}\right]\binom{\bw^k}{x_1^k} \ni  \begin{pmatrix}
 0 \\ A_2^*(x_1^k-x_2^k) \\ \vdots \\ A_n^*(x_1^k-x_n^k) \\ x_1^k-x_n^k \\
 \end{pmatrix}. 
\end{equation*}
Since the graph of a maximally monotone operators is demiclosed~\cite[Proposition~20.32]{bauschkecombettes}, taking the limit along a subsequence of $(\bw^k)$ which converges to $\bw$ and unravelling the resulting expression gives 
$$ -\bA^*x\in \partial f(\bw)\text{~~and~~}\bA\bw(\partial\iota_{\{b\}})^{-1}(x)\iff x\in\partial\iota_{\{b\}}(\bA\bw).$$
That is, $(\bw,x)$ is a Kuhn-Tucker pair for \eqref{eq:primal} and so, in particular, $\bw$ solves the primal problem~\eqref{eq:primal}. Moreover, if $A_i^*A_i$ are invertible for all $i\in\{1,\dots,n\}$, then $\bA^*\bA$ is also invertible \cite[Example~3.29]{bauschkecombettes}. Consequently, $\bA\bw^k\to b$ implies that $\bw^k=(\bA^*\bA)^{-1}\bA^*\bA\bw^k\to(\bA^*\bA)^{-1}\bA^*b$. 
\end{enumerate}
The proof is now complete.
\end{proof}

\begin{remark}
The form of ADMM in Theorem~\ref{th:admm ave} is a a multi-block version of the ``averaged operator'' form of two-block ADMM as it appears in \cite[Equations~(77)--(79)]{giselsson2016line}. In the two-block setting, Theorem~\ref{th:admm ave} includes \cite[Proposition~5.4.1]{bertsekas2015convex} as a special case.
\end{remark}

Next, we express the multi-block ADMM extension in Theorem~\ref{th:admm ave} in terms of the \emph{augmented Lagrangian}. Recall that the \emph{augmented Lagrangian} of \eqref{eq:primal} is the function $\mathcal{L}$ given by
$$ \mathcal{L}(w_1,\dots,w_n,\mu) := \sum_{i=1}^nf(w_i)+\bigl\langle\mu,\sum_{i=1}^nA_iw_i-b\bigr\rangle + \frac{1}{2}\bigl\|\sum_{i=1}^nA_iw_i-b\bigr\|^2. $$
The augmented Lagrangian form of our proposed algorithm in terms of $\mathcal{L}$ is given in Algorithm~\ref{alg:admm2} and its converged analysed in Corollary~\ref{cor:admm aug}.
\begin{algorithm}[!h]
\caption{Multi-block ADMM for \eqref{eq:sep convex} in augmented Lagrangian form. \label{alg:admm2}}
\SetKwInOut{Input}{Input}
\Input{Choose $\bmu^0=(\mu^0_1,\dots,\mu^0_{n})$ and $\gamma\in(0,1)$.}
\smallskip
\For{$k=1,2,\dots$}{
1.~Compute $\bw^k=(w^k_1,\dots,w^k_n)$ according to
\begin{subequations}\label{eq:multi-ADMM augmented}
\begin{align}[left = \empheqlbrace\,]
w^{k+1}_1 &=\argmin_{w_1}\,\mathcal{L}(w_1,w_2^{k},\dots,w_n^{k},\mu_1^k) \label{eq:multi-ADMM augmented a} \\
w^{k+1}_i &=\argmin_{w_i}\,\mathcal{L}(w_1^{k+1},\dots,w_i^{k+1},w_i,w_{i+1}^{k}\dots,w_n^{k-1},\mu_i^k) \quad \forall i\in\integ{2}{n-1}\rrbracket \label{eq:multi-ADMM augmented b} \\
w^{k+1}_n &= \argmin_{w_n}\,\mathcal{L}(w_1^{k+1},\dots,w_{n-1}^{k+1},w_n,\mu_n^k) \label{eq:multi-ADMM augmented c} 
\end{align}
2.~Update $\bmu^{k+1}=(\mu^{k+1}_1,\dots,\mu^{k+1}_{n-1})$ according to
\begin{align}[left = \empheqlbrace\,]
\begin{split}
\mu^{k+1}_i &=\mu^{k}_{i+1}+\sum_{j=i+2}^n\bigl(A_jw_j^{k}-A_jw_j^{k+1}\bigr) \\
&\qquad\qquad + (1-\gamma)\bigl(\mu^k_i-\mu_{i+1}^k+A_{i+1}w_{i+1}^{k}-A_{i+1}w_{i+1}^{k+1}\bigr)
\end{split} 
\forall i\in\integ{1}{n-1} \label{eq:multi-ADMM augmented d}  \\
\mu^{k+1}_n &= \mu^{k+1}_1 + \left(\sum_{j=1}^nA_jw_j^{k+1}-b\right) \label{eq:multi-ADMM augmented e}  
\end{align}
\end{subequations}
}
\end{algorithm}

\begin{corollary}[Multi-block ADMM -- augmented Lagrangian form]\label{cor:admm aug}
Let $n\geq 2$.  Suppose \eqref{eq:primal} has a Kuhn--Tucker pair, and that, for each $i\in\{1,\dots,n\}$, either $f_i$ is coercive or $A_i^*A_i$ is invertible. Let $\gamma\in(0,1)$. Given $\bmu^0=(\mu^0_1,\dots,\mu^0_{n})$, consider the sequences $(\bw^k)$ and  $(\bmu^k)$ given by Algorithm~\ref{alg:admm2}.
Then:
\begin{enumerate}[(a)]
\item The sequence $(\bw^k)$ is bounded and every weak limit point is a solution of the primal problem~\eqref{eq:primal}. Moreover, if $A_i^*A_i$ is invertible for all $i\in\{1,\dots,n\}$, then $(\bw^k)$ converges strongly.
\item The sequences $(\mu_1^k),\dots,(\mu_n^k)$ converge weakly to a solution of the dual problem~\eqref{eq:dual}.
\item The residual sequence converges strongly to zero. That is, $\sum_{i=1}^nA_iw_i^k\to b$ as $k\to+\infty$.
\end{enumerate}
\end{corollary}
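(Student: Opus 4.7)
The plan is to show that Algorithm~\ref{alg:admm2} is a reparametrisation of Algorithm~\ref{alg:admm}, and then invoke Theorem~\ref{th:admm ave}. Following the classical two-block equivalence between Douglas--Rachford splitting and ADMM (and mirroring the derivation in the proof of Theorem~\ref{th:admm ave}), the natural substitution identifies each $\mu_i^k$ with the $z_i^k$ of the averaged form, shifted by the ``tail'' of the primal residual. Concretely, I would set
$$ z_i^k \;:=\; \mu_i^k \;+\; \sum_{j=i+1}^{n} A_j w_j^{k-1} \;-\; b \qquad \text{for } i\in\integ{1}{n-1}, $$
and identify the iterate $\bw^{k+1}$ in Algorithm~\ref{alg:admm2} with the iterate $\bw^{k}$ in Algorithm~\ref{alg:admm}.

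Under this substitution, each $w_i$-subproblem of Algorithm~\ref{alg:admm2} reduces to the corresponding one of Algorithm~\ref{alg:admm} by expanding the augmented Lagrangian $\mathcal{L}$ and completing the square: the linear term $\langle \mu_i^k,\cdot\rangle$ combines with $\tfrac{1}{2}\|\cdot\|^2$ into $\tfrac{1}{2}\|\cdot+\mu_i^k\|^2$ (modulo an additive constant), and the substitution then delivers exactly the $z_i^k$ appearing in \eqref{eq:multi-ADMM averaged a}--\eqref{eq:multi-ADMM averaged b}. The $w_n$-subproblem requires one extra step: combining \eqref{eq:multi-ADMM augmented e} at the previous iteration with the correspondence for $\mu_1$ should give $\mu_n^k = z_1^k + A_1 w_1^{k+1}$, which is precisely the offset appearing in \eqref{eq:multi-ADMM averaged c}. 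For the multiplier updates, one substitutes the correspondence into \eqref{eq:multi-ADMM averaged d}--\eqref{eq:multi-ADMM averaged e}, uses the identity $x_{i+1}^k - x_i^k = z_{i+1}^k - z_i^k + A_{i+1}w_{i+1}^k$ (with $x_i^k$ as in the proof of Theorem~\ref{th:admm ave}), and verifies after routine algebra that the emerging recursion in $\bmu$ matches \eqref{eq:multi-ADMM augmented d}--\eqref{eq:multi-ADMM augmented e}.

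Once this reparametrisation is in place, the three assertions follow from Theorem~\ref{th:admm ave}: (a) is immediate from Theorem~\ref{th:admm ave}(a), and (c) is Theorem~\ref{th:admm ave}(c) verbatim. For (b), the substitution expresses each $\mu_i^k$ (for $i\in\integ{1}{n-1}$) as $x_i^k$ minus $\bigl(\sum_j A_j w_j^k - b\bigr) + \sum_{j>i}A_j(w_j^k-w_j^{k-1})$, where $x_i^k \wto x$ (a dual solution) by Theorem~\ref{th:admm ave}(b), the bracketed primal residual vanishes strongly by (c), and the remaining telescoping term vanishes weakly since $\bw^k$ itself converges weakly; this gives $\mu_i^k \wto x$. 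Weak convergence of $\mu_n^k$ to the same limit then follows directly from \eqref{eq:multi-ADMM augmented e} together with (c). The hard part of this plan is the index bookkeeping between the two algorithms---in particular, reconciling the fact that Algorithm~\ref{alg:admm2} uses $\bw^k$ from the previous iteration whereas Algorithm~\ref{alg:admm} uses only the current $\bz^k$---and handling the asymmetric role of $\mu_n^k$, which is not an independent relaxation variable but is pinned to $\mu_1^k + \sum_jA_jw_j^k - b$ by \eqref{eq:multi-ADMM augmented e}; once the correct substitution is pinned down, the verification that the seemingly complicated recursion \eqref{eq:multi-ADMM augmented d} really does correspond to the simple $z$-update becomes a mechanical (if tedious) calculation.
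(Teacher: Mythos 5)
Your approach---reparametrising Algorithm~\ref{alg:admm2} as Algorithm~\ref{alg:admm} via an affine substitution, matching the $w$-subproblems by completing the square in the augmented Lagrangian, and then invoking Theorem~\ref{th:admm ave}---is exactly the one the paper takes. The paper's explicit substitution, equation~\eqref{eq:z mu}, reads $\mu_i^k = z_i^{k+1} - \bigl(\sum_{j>i}A_jw_j^k - b\bigr)$ for $i\in\integ{1}{n-1}$ and $\mu_n^k = z_1^{k+1}+A_1w_1^{k+1}$; this agrees with your proposed substitution up to a one-step reindexing of the $\mu$-sequence, and both conventions serve equally well since the final conclusions are asymptotic.

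There is, however, a genuine gap in your proof of part~(b). You claim that the telescoping term $\sum_{j>i}A_j(w_j^k-w_j^{k-1})$ ``vanishes weakly since $\bw^k$ itself converges weakly.'' Theorem~\ref{th:admm ave}(a) does \emph{not} assert weak convergence of $(\bw^k)$: it only guarantees boundedness together with the fact that every weak cluster point is a primal solution. Without the extra hypothesis that $A_i^*A_i$ is invertible for all $i$, distinct subsequences of $(\bw^k)$ could a priori converge to distinct primal solutions, in which case $w_j^k-w_j^{k-1}$ need not tend weakly to zero. The paper closes exactly this gap by establishing the stronger fact that $\|A_jw_j^k-A_jw_j^{k+1}\|\to 0$ in norm, as follows: equations~\eqref{eq:dr} express each $A_jw_j^k$ linearly in $x_i^k$ and $z_i^k$; one has $\|\bz^{k+1}-\bz^k\|\to 0$ by averaged nonexpansiveness of $T_F$, and then $\|x_i^k-x_i^{k+1}\|\to 0$ follows by induction from nonexpansiveness of the resolvents in~\eqref{eq:prox dr}. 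Replacing your weak-convergence claim by this strong-convergence argument repairs the proof; the rest of your plan, including the treatment of $\mu_n^k$ via~\eqref{eq:multi-ADMM augmented e} and assertion~(c), is sound.
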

\begin{proof}
The proof is a continuation of the proof of Theorem~\ref{th:admm ave}. To this end, let $(\bz^k)$, $(\bx^k)$ and $(\bw^k)$ be as in the proof of Theorem~\ref{th:admm ave}. Define $\bmu^k=(\mu^k_1,\dots,\mu^k_n)$ according to
\begin{equation}\label{eq:z mu}
 \mu^k_i = \begin{cases}
 z^{k+1}_i -  \bigl(\displaystyle\sum_{j=i+1}^nA_jw_j^{k}-b\bigr) & i=1,\dots,n-1, \\
 z^{k+1}_1 + A_1w_1^{k+1} & i=n.
 \end{cases}
\end{equation}
Substituting \eqref{eq:z mu} into \eqref{eq:multi-ADMM averaged a}-\eqref{eq:multi-ADMM averaged c} gives
\begin{equation*}\left\{\begin{aligned}
w^{k+1}_1 &=\argmin_{w_1}\bigl\{f_1(w_1) + \frac{1}{2}\|A_1w_1+\sum_{j=2}^nA_jw_j^{k}-b+\mu_1^{k}\|^2\bigr\}  \\
w^{k+1}_i &=\argmin_{w_i}\bigl\{f_i(w_i) + \frac{1}{2}\|\sum_{j=1}^{i-1}A_jw_j^{k+1}+A_iw_i+\sum_{j=i+1}^nA_jw_j^{k}-b+\mu_i^{k}\|^2\bigr\} \quad \forall i\in\llbracket 2,n-1\rrbracket  \\
w^{k+1}_n &= \argmin_{w_n}\bigl\{f_n(w_n) + \frac{1}{2}\|\sum_{j=1}^{n-1}A_jw_j^{k+1}+A_nw_n-b+\mu^k_n\|^2\bigr\},
\end{aligned}\right.
\end{equation*}
which is equivalent to \eqref{eq:multi-ADMM augmented a}-\eqref{eq:multi-ADMM augmented c}. Similarly, substituting \eqref{eq:z mu} into \eqref{eq:multi-ADMM averaged d}~\&~\eqref{eq:multi-ADMM averaged e} gives \eqref{eq:multi-ADMM augmented d}~\&~\eqref{eq:multi-ADMM augmented e}.

We therefore have that assertions (a) and (c) follow immediately from Theorem~\ref{th:admm ave}(a)~\&~(c). To establish assertion (b), observe that
\begin{equation}\label{eq:mu alt}
\mu_i^k =
\begin{cases}
 \bigl(z^{k+1}_i+\displaystyle\sum_{j=1}^iA_jw_j^{k+1}\bigr)  + \displaystyle\sum_{j=1}^i\left(A_jw_j^{k}-A_jw_j^{k+1}\right)-  \bigl(\displaystyle\sum_{j=1}^nA_jw_j^{k}-b\bigr)  & i\in\llbracket 1,n-1\rrbracket\\
 z^{k+1}_1 + A_1w_1^{k+1} &i=n.
\end{cases}\end{equation}
Convergence of $(\mu^k_i)$ to a dual solution of \eqref{eq:dual} follows with the help of Theorem~\ref{th:admm ave}(b)~\&~(c) by taking the limit as $k\to+\infty$ in \eqref{eq:mu alt}. In taking this limit, we note that $\|A_jw_j^{k}-A_jw_j^{k+1}\|\to 0$ for all $j\in\{1,\dots,n\}$ as a consequence of combining \eqref{eq:dr} with the fact that  $\|\bz^{k+1}-\bz^k\|\to 0$ and that $\|x_{i}^k-x_j^k\|\to 0$ for all $i,j\in\{1,\dots,n\}$.
\end{proof}

\begin{remark}
If the operators $F_2,\dots,F_n$ in \eqref{eq:F_i} are uniformly monotone, then
Theorem~\ref{th:main uniform mono} can be used in place of Theorem~\ref{th:main}
to allow the limiting case with $\gamma=1$. In this case, setting $i=n-1$ in \eqref{eq:multi-ADMM augmented d} gives $\mu^{k}_{n-1}=\mu^k_n$ for all $k\in\mathbb{N}$, which can be substituted into \eqref{eq:multi-ADMM augmented e} to eliminate the sequence $(\mu^k_n)$ from Algorithm~\ref{alg:admm}. In particular, in the case when $\gamma=1$ and $n=2$, \eqref{eq:multi-ADMM augmented} reduces to the standard form of two-block ADMM given by
\begin{equation*}\left\{\begin{aligned}
w^{k+1}_1 &=\argmin_{w_1}\,\mathcal{L}(w_1,w_2^{k},\mu_1^k) \\
w^{k+1}_2 &= \argmin_{w_2}\,\mathcal{L}(w_1^{k+1},w_2,\mu_1^k) \\
\mu^{k+1}_1 &= \mu^{k+1}_1 + \left(A_1w_1^{k+1}+A_2w_2^{k+1}-b\right).
\end{aligned}\right.
\end{equation*}
\end{remark}

\subsection{Numerical Example: Robust PCA with Partial Information}
In this section, we provide a numerical illustration of our multi-block ADMM method in robust principle component analysis. Given a matrix $M\in\mathbb{R}^{m\times n}$, the \emph{robust principle component analysis (PCA)} problem is to find two matrices $L,S\in\mathbb{R}^{m\times n}$ such that $L$ is low rank and $S$ is sparse. In~\cite{wright2009robust}, this problem is formulated as the convex minimisation problem
\begin{equation}\label{eq:robust pca}
\min_{L,S\in\mathbb{R}^{m\times n}}\|L\|_\ast+\lambda\|S\|_1\quad\text{subject to}\quad L+S=M.
\end{equation}
where $\|\cdot\|_\ast$ denotes the nuclear norm, $\|\cdot\|_1$ denotes the
$\ell_1$-norm, and $\lambda>0$. The parameter $\lambda$ is chosen to balance the
competing effects of the nuclear norm, which promotes low rankedness of $L$, and the $\ell_1$-norm, which promotes sparsity in $S$.

In practice, it is not always possible to observe all the entries of the matrix $M\in\mathbb{R}^{m\times n}$. Instead, one observes only entries of $M$ corresponding to some index set $\Omega\subseteq\{1,\dots,m\}\times\{1,\dots,n\}$. That is, the entry $M_{ij}$ is known if $(i,j)\in\Omega$.  To deal with robust PCA problem having only partial knowledge of $M$, \cite{tao2011recovering} proposed an extension of \eqref{eq:robust pca} given by
\begin{equation*}
\min_{L,S,P\in\mathbb{R}^{m\times n}}\|L\|_\ast+\lambda\|S\|_1\quad\text{subject to}\quad \|P_{\Omega}(M-L-S)\|_{F}\leq \delta,
\end{equation*}
where $P_{\Omega}$ denotes the orthogonal projection onto matrices supported by $\Omega$ and $\delta>0$. By setting $C:=\{D\in\mathbb{R}^{m\times n}:\|P_{\Omega}(D)\|_F\leq \delta\}$, this problem can be expressed as given by
\begin{equation}\label{eq:robust pca partial}
  \min_{L,S,D\in\mathbb{R}^{m\times n}}\|L\|_\ast+\lambda\|S\|_1+\iota_{C}(D)\quad\text{subject to}\quad L+S+D=M.
\end{equation}
The problem \eqref{eq:robust pca partial} can be solved using a three block ADMM-type method known as \emph{alternating splitting augmented Lagrangian method (ASALM)} \cite{tao2011recovering}. This method is given by
\begin{equation}\label{eq:tao yuan}
\left\{\begin{aligned}
 D^{k+1} &= \displaystyle\argmin_{D\in C}\bigl\{\frac{1}{2}\|L^k+S^k+D-M+\mu^k\|^2\bigr\} \\
 S^{k+1} &= \displaystyle\argmin_{S\in\mathbb{R}^{m\times n}}\bigl\{\lambda\|S\|_1 + \frac{1}{2}\|L^k+S+D^{k+1}-M+\mu^k\|^2\bigr\} \\
 L^{k+1} &= \displaystyle\argmin_{L\in\mathbb{R}^{m\times n}}\bigl\{\|L\|_\ast + \frac{1}{2}\|L+S^{k+1}+D^{k+1}-M+\mu^k\|^2\bigr\} \\
 \mu^{k+1} &= \mu^k + \bigl(L^{k+1}+S^{k+1}+D^{k+1}-M\bigr). 
\end{aligned}\right.
\end{equation}
Despite its numerical performance, the theoretical underpinnings of ASALM  are not completely understood. Indeed, the authors of \cite{tao2011recovering} were only able to showed boundedness of the ASALM iterates in a special case. This is consistent with counter-examples for convergence of the direct extension of ADMM to three blocks \cite{chen2016admm}.

\begin{remark}
Noting that \eqref{eq:robust pca partial} is of the form specified by \eqref{eq:sep convex}, we may apply the three block version of our multi-block ADMM developed in Section~\ref{s:admm}. It is interesting to compare the similarities of \eqref{eq:tao yuan} with \eqref{eq:multi-ADMM augmented} in limiting case with $\gamma=1$ which can be expressed as
\begin{equation*}
\left\{\begin{aligned}
 D^{k+1} &= \displaystyle\argmin_{D\in C}\bigl\{\frac{1}{2}\|L^k+S^k+D-M+\mu^k_1\|^2\bigr\} \\
 S^{k+1} &= \displaystyle\argmin_{S\in\mathbb{R}^{m\times n}}\bigl\{\lambda\|S\|_1 + \frac{1}{2}\|L^k+S+D^{k+1}-M+\mu^k_2\|^2\bigr\} \\
 L^{k+1} &= \displaystyle\argmin_{L\in\mathbb{R}^{m\times n}}\bigl\{\|L\|_\ast + \frac{1}{2}\|L+S^{k+1}+D^{k+1}-M+\mu^k_2\|^2\bigr\} \\
\mu^{k+1}_1 &=\mu^{k}_{2}+\bigl(L^{k}-L^{k+1}\bigr)  \\
\mu^{k+1}_2 &= \mu^{k+1}_1 +\bigl(L^{k+1}+S^{k+1}+D^{k+1}-M\bigr). \\
\end{aligned}\right.
\end{equation*}
As compared to \eqref{eq:tao yuan}, \eqref{eq:multi-ADMM augmented} has two copies of the the ``dual variable'' in its update. The $\mu^k_1$-update in \eqref{eq:multi-ADMM augmented} allows for correction by $L^k-L^{k+1}$ not present in  \eqref{eq:tao yuan}.
\end{remark}

We now compare \eqref{eq:tao yuan} and the multi-block ADMM in Algorithm~\ref{alg:admm} on randomly generated test problems. We took $L$ to be a binary ``checker-board'' matrix, generated $S$ by sampling the standard normal distribution in approximately 15\% of its entries, and generated the set $\Omega$ to be approximately 40\% of entries. The partial matrix $M$ was then computed elementwise according to $M_{ij}=L_{ij}+S_{ij}$ for all $(i,j)\in\Omega$. A random examples of $L,M$ and $\Omega$ generated in this way is shown in Figure~\ref{fig:pca_problem}. 

We ran both methods for a $20\times 20$ and a $40\times 40$ instance of \eqref{eq:robust pca partial}. The best parameters in \eqref{eq:robust pca partial} for this instance were found by trial and error to be $\lambda=0.25$ and $\delta=0.1$. For Algorithm~\ref{alg:admm}, the stepsize $\gamma$ was chosen as $\gamma=0.8$. All methods were initialised with the zero matrices. After $2000$ iterations, the reconstructions of $L$ for both algorithms were inspected and were visually indistinguishable.

\begin{figure*}[!htb]
  \centering
  \includegraphics[width=\textwidth]{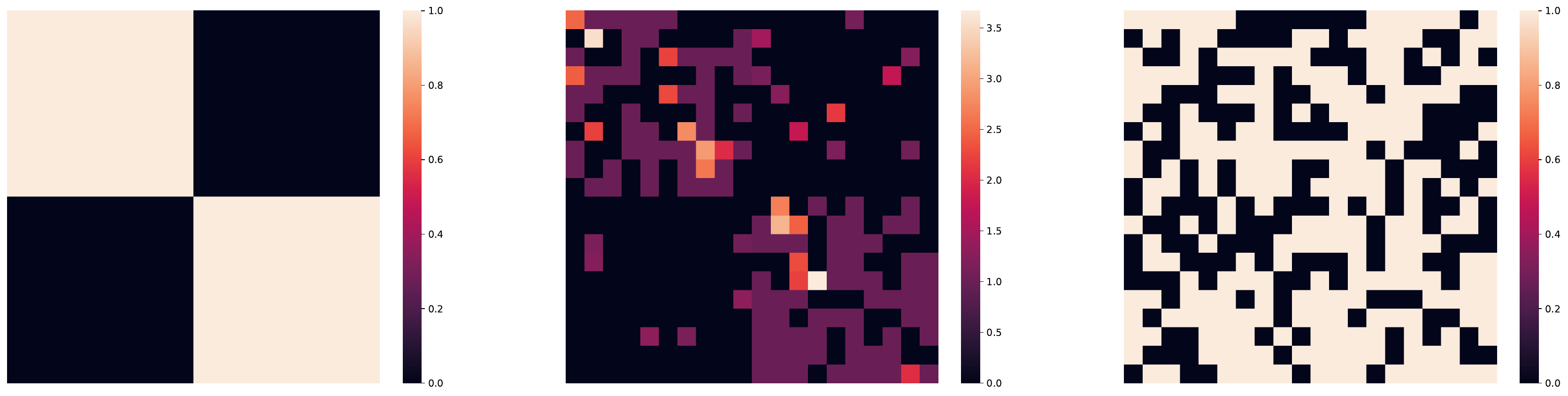}
  \caption{A random instance of~\eqref{eq:robust pca} for $m=n=20$: (left) the low rank
    matrix $L$, (center) the noisy partial matrix $M$, and (right) the set $\Omega$ representing the observed entries. \label{fig:pca_problem}}
\end{figure*}

\begin{figure*}[!htb]
\centering
\begin{subfigure}[t]{0.48\textwidth}
    {\includegraphics[trim={3mm 0 3mm 0},clip,width=1\textwidth]{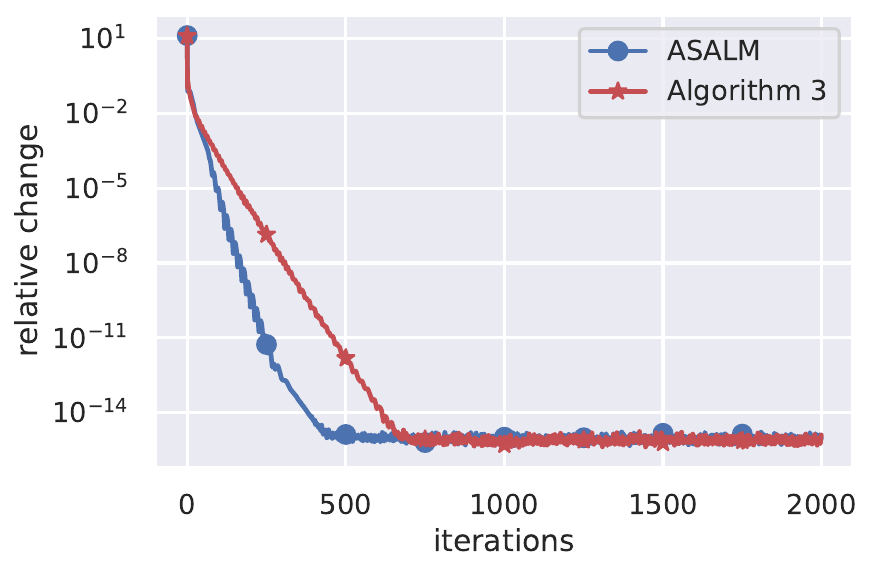}}
\caption{$m=n=20$}
\end{subfigure}
\hfill
\begin{subfigure}[t]{0.48\textwidth}
{\includegraphics[trim={3mm 0 3mm 0},clip,width=1\textwidth]{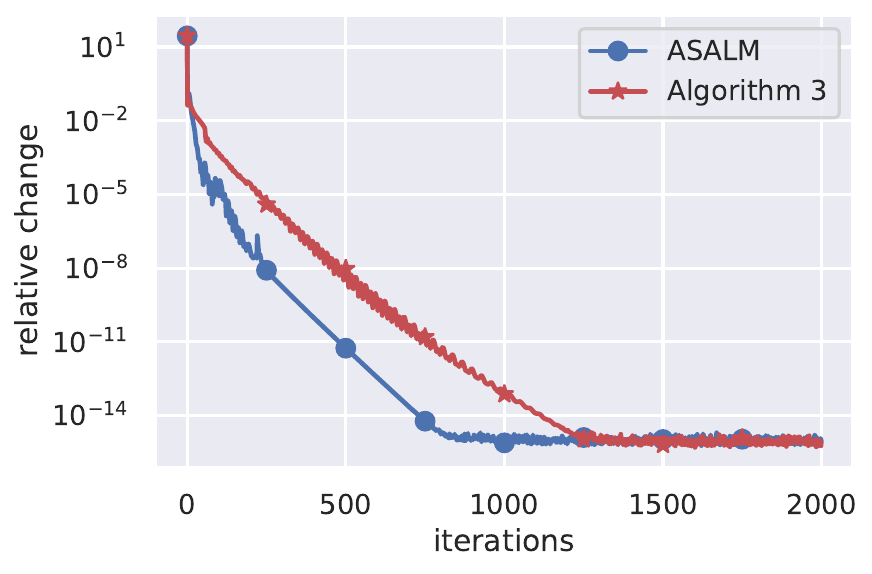}}
\caption{$m=n=40$}
\end{subfigure}
\caption{Results for problem~\eqref{eq:robust pca}. \label{fig:pca-residual}}
\end{figure*}

We compared the progress of both algorithms as a function of iterations in Figure~\ref{fig:pca-residual}. To do so, we monitored the relative change of the recovered low rank and sparse components of the solution which is given by
\begin{equation*}
 \frac{\|(L^{k+1},S^{k+1})-(L^k,S^k)\|}{\|(L^k,S^k)\|+1}.
\end{equation*}
This quantity was used in \cite[Section~9]{tao2011recovering} as a stopping criteria for ASALM. Although the relative change for both methods decays to the same final value, Figure~\ref{fig:pca-residual} shows that ASALM is faster in terms of iterations. However, its poorer theoretical properties, as compared to Algorithm~\ref{alg:admm}, may counterbalance this is in some situations.

\section{Conclusions}\label{s:conclusion}
In this work, we have introduced a family of frugal resolvent splittings with minimial lifting in the sense of Ryu~\cite{ryu2020uniqueness}. We investigated applications of this family in distributed decentralised optimisation and in multi-block ADMM. To conclude, we outline possible directions for further research arising from this work.

\paragraph{Characterising frugal resolvent splittings for $n$ operators.} The proximal point algorithm and the Douglas--Rachford algorithm are the unique frugal resolvent splittings for $n=1$ and $n=2$, respectively \cite{ryu2020uniqueness}. For $n\geq 3$, there seems to be multiple distinct schemes (\emph{i.e.,} this work and \cite{ryu2020uniqueness,aragon2020strengthened}). It would be interesting to characterise and enumerate all possible frugal resolvents splittings for a given number of monotone operators. Iterations with different structure will be potentially useful for distributed decentralised optimisation with non-cyclic network topologies.

\paragraph{Behaviour on infeasible and pathological problems.} In this work, we only analysed our frugal resolvent splitting and our multi-block ADMM in the consistent, non-pathological setting. In the literature, the behaviour of the Douglas--Rachford method and two-block ADMM applied to infeasible and/or pathological problems is relatively well understood \cite{ryu2019douglas} within the framework of Pazy's trichotomy theorem \cite{pazy1971asymptotic}. It would be interesting to analyse the behaviour our methods in potentially infeasible and/or pathological settings.

\paragraph{Iteration complexity of the multi-block ADMM.}
In our analysis of ADMM, Theorem~\ref{th:admm ave} focused on convergence of the iterates generated by the algorithm, but did not consider iteration complexity. The worst-case iteration complexity of two-block ADMM is known to be $O(1/k)$ in the ergodic sense \cite{he2021convergence,monterio2013iteration}. It would be interesting to investigate the iteration complexity of our multi-block ADMM extension to see if it is still $O(1/k)$.

\paragraph{Interpretations in terms of the PPA.}
The Douglas--Rachford algorithm can be interpreted as a proximal point algorithm
applied to the so-called ``splitting operator'' (see
\cite{eckstein1992douglas}). Since our proposed framework is a generalisation of the
Douglas-Rachford algorithm to $n$ operators, it is natural to ask if it can also be understood as an instance of the
proximal point algorithm.

\section*{Acknowledgements}
The work of YM was supported by the Wallenberg
Al, Autonomous Systems and Software Program (WASP) funded by the Knut
and Alice Wallenberg Foundation.  The project number is 305286.
MKT is supported in part by Australian Research Council grant DE200100063.
The authors would like to thank the anoymous referees for helpful comments, which included the improved PDHG formulation given in \eqref{eq:PDHG2}.

\end{document}